\documentclass[11pt]{amsart}

\usepackage{amsmath,amssymb,amsthm,mathtools,bm}
\usepackage[margin=1in]{geometry}
\usepackage{hyperref}
\usepackage{algorithm}
\usepackage{algorithmic}
\usepackage{tcolorbox}

\theoremstyle{plain}
\newtheorem{theorem}{Theorem}
\newtheorem{lemma}{Lemma}

\theoremstyle{definition}
\newtheorem{defn}[theorem]{Definition}
\newtheorem*{notation}{Notation}
\newtheorem{assump}{Assumption}

\theoremstyle{remark}
\newtheorem*{remark}{Remark}

\newcommand{\dd}{\,\mathrm{d}}
\DeclarePairedDelimiter{\abs}{\lvert}{\rvert}

\usepackage{todonotes}

\usepackage{subcaption}

\title{Mirror Descent for Deterministic Optimal Control}
\date{\today} 
\author{Ye Feng}
\address{Mathematics Department, Duke University}
\email{ye.feng@duke.edu}

\author{Jianfeng Lu}
\address{Department of Mathematics, Department of Physics, Department of Chemistry, Duke University}
\email{jianfeng@math.duke.edu}

\begin{document}

\begin{abstract}
We study an explicit mirror-descent method for finite-horizon deterministic optimal control problems. The method is motivated by Pontryagin's maximum principle: at each iteration, one solves the state and adjoint equations and updates the control by maximizing a first-order approximation of the regularized Hamiltonian penalized by a Bregman divergence. In the Euclidean case, the update reduces to a projected gradient step in the control variable. Under global smoothness assumptions and uniform convexity of the mirror map, we prove a relative smoothness estimate for the cost functional and derive an energy dissipation inequality for sufficiently small step sizes. Under an additional concavity assumption on the unregularized Hamiltonian and convexity of the terminal cost, we establish relative convexity of the regularized objective. These estimates yield an \(O(1/n)\) convergence rate in the unregularized convex case and a geometric rate when the control regularization parameter is positive. Numerical examples illustrate the behavior of the method in linear-quadratic, degenerate convex, and nonlinear high-dimensional settings.
\end{abstract}

\maketitle

\section{Introduction}

Optimal control problems are a fundamental class of optimization problems with applications in engineering, economics, and scientific computing. A classical approach to such problems is based on Pontryagin’s maximum principle (PMP), which characterizes optimal controls through a system of forward and adjoint differential equations coupled with a pointwise optimality condition.

Beyond its role as a theoretical tool, PMP also naturally leads to iterative numerical methods. A prominent example is the method of successive approximations (MSA), which alternates between solving the state and adjoint equations and updating the control via a pointwise optimization of the Hamiltonian \cite{krylov1972algorithm,chernousko1982method}. In practice, this update step is often implemented either as a projected gradient-type step or through implicit schemes that solve a regularized local problem (for example, see \cite{sethi2024modified}). These methods motivate the study of explicit first-order control updates and their convergence properties.

In this work, we study an explicit control update scheme derived from PMP and give a convergence analysis using a Bregman divergence framework. We consider a regularized optimal control problem of the form
\[
J^\tau(u) = \int_0^T \big(f_t(x_t,u_t) + \tau h(u_t)\big)\dd t + g(x_T),
\]
and analyze an iterative scheme in which the control is updated by maximizing a first-order approximation of the regularized Hamiltonian penalized by a Bregman proximal term. When the regularizer $h(u) = \tfrac{1}{2}|u|^2$, this update reduces to an explicit projected gradient step in the control variable. The resulting algorithm is fully explicit and is thus directly comparable to standard first-order methods.

The use of a general convex regularizer allows us to formulate the algorithm in terms of Bregman divergences, which provides a convenient and flexible framework for the analysis. Although our primary interest is in the Euclidean setting, this formulation enables a unified treatment of the update step and simplifies the derivation of key estimates.

Our work is related to several strands of literature. On the one hand, there has been growing interest in mirror descent and Bregman proximal methods in optimal control and related infinite-dimensional optimization problems, including stochastic or measure-valued control settings, as well as continuous-time limits formulated as gradient flows \cite{reisinger2023linear,kerimkulov2025mirror,sethi2025mirror}. On the other hand, recent works have studied variants of MSA with implicit or regularized updates \cite{kerimkulov2021modified,sethi2024modified}. In contrast, the present paper focuses on a deterministic control setting and provides a direct analysis of a fully explicit iterative scheme derived from PMP.

Following a variational perspective similar to that of \cite{kerimkulov2025mirror}, we base our analysis on the first variation formula, which expresses the derivative of the cost functional in terms of the gradient of the Hamiltonian. This allows us to interpret the update step as a mirror-descent-type iteration. We show that the cost functional satisfies a relative smoothness property with respect to the induced Bregman divergence, leading to an energy dissipation inequality along the iterates. Under an additional convexity assumption on the Hamiltonian, we further establish a relative convexity property with modulus given by the regularization parameter. Combining these properties, we derive convergence rates for the proposed algorithm: a sublinear rate of order $O(1/n)$ in the unregularized case and a geometric rate in the presence of regularization. Together, these results provide a unified variational framework for analyzing explicit first-order PMP-based methods in deterministic optimal control.

The remainder of the paper is organized as follows. In Section~\ref{sec:formulation}, we introduce the problem formulation and standing assumptions. Section~\ref{sec:max_prin} reviews Pontryagin’s maximum principle for deterministic controls and establishes well-posedness of the state and adjoint equations. In Section~\ref{sec:first_var}, we derive the first variation formula for the cost functional, introduce the Bregman divergence, and formulate the mirror descent-based MSA algorithm. Section~\ref{sec:re_smooth} establishes relative smoothness, and Section~\ref{sec:energy} proves an energy dissipation property. Section~\ref{sec:re_convex} establishes relative convexity under an additional structural assumption. Finally, Section~\ref{sec:conv_rates} combines the above ingredients to obtain convergence rates. Section~\ref{sec:numerics} includes three numerical examples: a linear-quadratic example with an explicit solution as a sanity check, a nonquadratic example highlighting the qualitative difference between the regularized and unregularized cases, and a high-dimensional nonlinear example used to study dimension dependence.

\subsection*{Related work}

Iterative methods for optimal control derived from Pontryagin's maximum principle (PMP) have been studied extensively. A classical example is the method of successive approximations (MSA), and several recent works have considered modified variants that stabilize the control update through augmented or proximal-type regularization. In particular, \cite{kerimkulov2021modified} proposed a modified MSA for stochastic control problems with controls entering both the drift and diffusion, and established convergence under general assumptions together with convergence rates under additional structural conditions. More recently, \cite{sethi2024modified} showed that a modified MSA can be interpreted as an implicit Euler discretization of a gradient-flow system, and studied both the convergence of the interpolated iterates to the corresponding flow and their asymptotic behavior. Related proximal policy-gradient methods for finite-horizon continuous-time stochastic control were studied in \cite{reisinger2023linear}, where linear convergence to stationary points was established under suitable assumptions.

Proximal and trust-region ideas also play an important role in reinforcement learning and policy optimization. Representative examples include trust region policy optimization (TRPO) \cite{schulman2015trust} and proximal policy optimization (PPO) \cite{schulman2017proximal}, which stabilize policy updates by controlling the deviation from the previous iterate, typically through KL-divergence or related proximal penalties. Although these methods arise in a different algorithmic setting, they provide a broader connection between stabilized policy updates and proximal optimization ideas.

A closely related direction studies mirror-descent and Bregman-geometric formulations of control and policy optimization. In \cite{kerimkulov2025mirror}, mirror descent was developed for stochastic control problems with measure-valued controls, and relative smoothness and relative convexity of the objective were established with respect to the Bregman divergence induced by the regularizer, leading to convergence-rate estimates. Related developments can be found in \cite{sethi2025mirror,sethi2025entropy}. In particular, \cite{sethi2025mirror} considers mirror-descent-type methods for constrained stochastic control problems with Markov controls, while \cite{sethi2025entropy} develops a policy mirror-descent framework for continuous-time stochastic control with entropy regularization and annealing.

Mirror-descent ideas have also appeared in model predictive control (MPC). Wagener et al.~\cite{wagener2019online} formulated MPC as an online learning problem and proposed a dynamic mirror-descent framework for updating finite-dimensional parameters of a receding-horizon controller. In a different direction, Bregman distances have been used in deterministic optimal control as regularization tools for constrained optimization problems. The iterative Bregman regularization method of \cite{porner2016iterative} provides convergence and error estimates for problems with inequality constraints, and \cite{porner2018inexact} extends this framework to inexact settings with discretization and numerical errors.

The present paper considers a finite-horizon deterministic optimal control problem with open-loop controls and studies a mirror-descent iteration directly in the control trajectory. The update is derived from the PMP first-variation formula, and the analysis is based on relative smoothness and relative convexity with respect to the Bregman divergence. The main purpose is to give a direct and self-contained formulation of this explicit PMP-based mirror-descent scheme and its basic convergence properties.

\begin{notation}
    For a vector $x = (x_1, \ldots, x_n) \in \mathbb{R}^n$ and a matrix $A = (a_{ij}) \in \mathbb{R}^{m \times n}$, $\abs{x}$ denotes the Euclidean norm of $x$ and $\abs{A}$ denotes the spectral norm of $A$. Throughout the paper, $\|\cdot\|$ denotes the operator norm.
\end{notation}

\section{Problem Formulation}\label{sec:formulation}

We consider a deterministic control problem on a finite horizon $T > 0$. The state $x_t\in\mathbb R^d$ evolves according to
\begin{equation}\label{eq:state}
\dot x_t = b(t,x_t,u_t)
\end{equation}
with fixed initial condition $x_0\in\mathbb R^d$. Here $b:[0,T]\times\mathbb R^d\times U\to\mathbb R^d$, and the control $u$ belongs to $\mathcal U:=C([0,T];U)$, where $U$ is a closed convex subset of $\mathbb R^m$. We often write $b_t(x,u):=b(t,x,u)$ and similarly for $f_t$.

We define the regularized cost functional
\begin{equation}\label{eq:cost}
J^\tau(u) := \int_0^T \bigl(f_t(x_t,u_t)+\tau h(u_t)\bigr)\dd t + g(x_T),
\end{equation}
with running cost $f:[0,T]\times\mathbb R^d\times U\to\mathbb R$, terminal cost $g:\mathbb R^d\to\mathbb R$, convex regularizer $h:U\to\mathbb R$, and regularization parameter $\tau\ge 0$.
It is convenient to decompose
\[
J^\tau(u)=J^0(u)+\tau H(u),
\qquad
H(u):=\int_0^T h(u_t)\dd t.
\]
A pair $(u, x)$ is \textit{admissible} if $u\in \mathcal{U}$ and $x$ is the unique solution of \eqref{eq:state}. An admissible pair $(u^*, x^*)$ is \textit{optimal} if
\begin{align}
    J^{\tau}(u^*) \leq J^{\tau}(u) \nonumber
\end{align}
for every admissible pair $(u, x)$. Formally, the control problem is to find an optimal control $u^{*}$ by solving the minimization problem
\begin{align}
    (\mathbf{P}) \quad
    \inf_{u \in \mathcal{U}} J^\tau(u). \nonumber
\end{align}

\begin{assump}\label{asp:main_asp}
    We make the following standing assumptions:
    \begin{enumerate}
        \item $U$ is a nonempty, closed, convex subset of $\mathbb{R}^m$.
        \item The maps $b: [0,T] \times \mathbb{R}^d \times U \to \mathbb{R}^d$ and $f: [0,T] \times \mathbb{R}^d \times U\to \mathbb{R}$ are continuous, and there exists a constant $M > 0$ such that for $\varphi = b, f$,
              \begin{align}
                  |\varphi_t(x, u) - \varphi_t(x', u')| \leq M(|x - x'| + |u - u'|), \quad |\varphi_t(0, u)| \leq M, \nonumber
              \end{align}
              for any $t \in [0,T]$, $x, x' \in \mathbb{R}^d$, and $u, u' \in U$.
        \item For $\varphi = b, f$, the map $\varphi_t(x,u)$ is twice continuously differentiable in $(x,u)$, and, possibly after increasing $M$, the same constant satisfies
              \begin{align}
                  |\nabla_x \varphi_t(x, u) - \nabla_x \varphi_t(x', u')| \leq M(|x - x'| + |u - u'|), \nonumber
              \end{align}
              for any $t \in [0,T]$, $x, x' \in \mathbb{R}^d$, and $u, u' \in U$.
              Further, assume that there exist constants $M_{b,uu}, M_{f,uu} > 0$ such that
\[
\|\nabla^2_{uu} b_t(x,u)\| \le M_{b,uu},
\qquad
\|\nabla^2_{uu} f_t(x,u)\| \le M_{f,uu},
\]
for all $(t,x,u) \in [0,T]\times\mathbb{R}^d\times U$.
        \item The map $g: \mathbb{R}^d \to \mathbb{R}$ is continuously differentiable and satisfies
        \[
        \max\{|g(x) - g(x')|, |\nabla g(x) - \nabla g(x')|\} \leq M|x - x'|, \quad |g(0)| \leq M,
        \]
        for any $x, x' \in \mathbb{R}^d$.
        \item The regularizer $h : U \to \mathbb{R}$ is convex and continuously differentiable.
        \item There exists a (continuous) optimal solution to Problem (\textbf{P}).
    \end{enumerate}
\end{assump}

\begin{remark}
    Assumption~\ref{asp:main_asp} (2) and (3) imply that, on $[0,T]\times\mathbb{R}^d\times U$, the first-order derivatives
    \[
    \nabla_x b_t(x,u),\quad \nabla_u b_t(x,u),\quad
    \nabla_x f_t(x,u),\quad \nabla_u f_t(x,u),
    \]
    and the second-order derivatives
    \[
    \nabla^2_{xx} b_t(x,u),\quad \nabla^2_{xu} b_t(x,u),\quad
    \nabla^2_{xx} f_t(x,u),\quad \nabla^2_{xu} f_t(x,u)
    \]
    are uniformly bounded by $M$. 
\end{remark}

\section{Pontryagin's Maximum Principle} \label{sec:max_prin}

In this section, we recall a necessary condition for optimality in Problem (\textbf{P}), namely Pontryagin's maximum principle. We also establish well-posedness of the state and adjoint equations for continuous controls, together with a uniform boundedness estimate.

Define the unregularized Hamiltonian
\begin{equation}\label{eq:H0}
\mathcal H_t^0(x,p,u):= p\cdot b_t(x,u)- f_t(x,u),
\end{equation}
and the regularized Hamiltonian
\begin{equation}\label{eq:Htau}
\mathcal H_t^\tau(x,p,u):= \mathcal H_t^0(x,p,u) - \tau h(u).
\end{equation}

Given $u\in\mathcal U$ and the corresponding state $x$, we define the adjoint $p$ by
\begin{equation}\label{eq:adjoint}
\dot p_t = - \nabla_x \mathcal H_t^0(x_t,p_t,u_t),
\qquad
p_T = - \nabla g(x_T).
\end{equation}
Equivalently, one may use $\nabla_x\mathcal H_t^\tau$ in \eqref{eq:adjoint}, since the regularizer $h$ is independent of the state variable.

\begin{lemma}\label{lmm:a_priori}
    Under Assumption~\ref{asp:main_asp}, for any control $u \in \mathcal{U} = C([0, T]; U)$, the state equation \eqref{eq:state} admits a unique solution $x$, and the adjoint equation \eqref{eq:adjoint} admits a unique solution $p$. Moreover, $|x_t| \leq M_X$ and $|p_t| \leq M_P$ for all $t \in [0,T]$ with constants given by $M_X := (|x_0|+MT)e^{MT}$ and $M_P := M(1+T)e^{MT}$.
\end{lemma}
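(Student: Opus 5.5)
We will treat the state and adjoint equations in turn as ODEs with continuous, globally Lipschitz right-hand sides. Then we obtain the bounds from Gr\"onwall's inequality.

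\emph{State equation.} Fix $u\in\mathcal U$ and set $F(t,x):=b_t(x,u_t)$. Since $b$ and $u$ are continuous, $F$ is continuous on $[0,T]\times\mathbb R^d$. By Assumption~\ref{asp:main_asp}(2), $F$ is Lipschitz in $x$ with constant $M$, uniformly in $t$. The global Picard--Lindel\"of theorem (e.g.\ a contraction argument in $C([0,T];\mathbb R^d)$ with a weighted sup-norm $\sup_t e^{-2Mt}|x_t|$) then gives a unique $C^1$ solution on all of $[0,T]$. For the bound, Assumption~\ref{asp:main_asp}(2) gives
\[
|b_t(x,u_t)|\le |b_t(x,u_t)-b_t(0,u_t)|+|b_t(0,u_t)|\le M|x|+M.
\]
Hence $|x_t|\le |x_0|+Mt+M\int_0^t|x_s|\dd s$. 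Gr\"onwall's inequality yields $|x_t|\le (|x_0|+MT)e^{MT}=M_X$.

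\emph{Adjoint equation.} With $x$ now fixed, \eqref{eq:adjoint} reads
\[
\dot p_t=-A_t^\top p_t+\nabla_x f_t(x_t,u_t),\qquad A_t:=\nabla_x b_t(x_t,u_t).
\]
This is a linear ODE whose coefficients are continuous in $t$ by Assumption~\ref{asp:main_asp}(3). By the Remark following the assumptions, $|A_t|\le M$ and $|\nabla_x f_t(x_t,u_t)|\le M$. The terminal condition $p_T=-\nabla g(x_T)$ satisfies $|p_T|\le M$, since $g$ is $M$-Lipschitz. Existence and uniqueness on $[0,T]$ follow from linear ODE theory, solving backward from $T$.

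For the bound, we pass to reversed time $s=T-t$ and set $q_s:=p_{T-s}$. Then
\[
|q_s|\le M+\int_0^s\bigl(M|q_r|+M\bigr)\dd r\le M(1+T)+M\int_0^s|q_r|\dd r.
\]
Gr\"onwall's inequality gives $|p_t|\le M(1+T)e^{MT}=M_P$.

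\emph{Main obstacle.} The only delicate point is justifying the derivative bounds used in the adjoint step, namely $|\nabla_x b|,|\nabla_x f|\le M$ (spectral norm) and $|\nabla g|\le M$. Each follows from the corresponding global Lipschitz condition: a differentiable function that is $M$-Lipschitz in $x$ has gradient (or Jacobian) norm at most $M$. This is exactly the content of the Remark, so we will invoke it. Beyond that, we only need to check that the constants from Gr\"onwall match those stated.
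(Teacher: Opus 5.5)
Your proof is correct and follows essentially the same route as the paper: you use the bounds $|b_t(x,u_t)|\le M|x|+M$ and $|\nabla_x b|,|\nabla_x f|,|\nabla g|\le M$, write the state and adjoint equations in integral form, and apply Gr\"onwall, reaching the same constants $M_X$ and $M_P$ (your time reversal $q_s=p_{T-s}$ is equivalent to the paper's backward Gr\"onwall), and you give more detail on existence and uniqueness than the paper does. One minor imprecision: Assumption~\ref{asp:main_asp}(3) gives regularity only in $(x,u)$, so continuity of $t\mapsto\nabla_x b_t(x_t,u_t)$ does not follow from it directly; it does follow because the Lipschitz bound on $\nabla_x b$ makes the difference quotients of the jointly continuous $b$ converge uniformly, and in any case bounded measurable coefficients suffice for the linear adjoint equation.
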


\begin{proof}
    Let $u \in \mathcal U$, under Assumption~\ref{asp:main_asp}, the state equation \eqref{eq:state} has a unique solution $x$ for the control $u$. Since $x$ is continuous, Assumption~\ref{asp:main_asp} also yields a unique solution $p$ of the adjoint equation \eqref{eq:adjoint} associated with $(u,x)$. To establish the boundedness estimate, observe that $x$ satisfies the integral form of the forward equation,
    \begin{align}
        x_t = x_0 + \int_{0}^{t} b_s(x_s, u_s )\dd s , \quad \forall t \in [0,T]. \nonumber
    \end{align}
    Taking norms and applying Assumption~\ref{asp:main_asp} (2),
    \begin{align}
        |x_t| & \leq |x_0| + \int_{0}^{t} |b_s(x_s, u_s)| \dd s, \nonumber \\
        & \leq |x_0| + \int_{0}^{t} \left( M|x_s| + M \right) \dd s,
        \qquad \forall t \in [0,T]. \nonumber
    \end{align}
    By Gr\"onwall's inequality,
    \begin{align}\label{eq:bdd_x}
        |x_t| & \leq ( |x_0| + Mt) e^{Mt} \leq ( |x_0| + MT) e^{MT} = M_X, \quad \forall t \in [0,T].
    \end{align}
    Similarly, $p$ satisfies the integral form of the adjoint equation,
    \begin{align}
        p_t = - \nabla_x g(x_T) +  \int^{T}_{t} \nabla_x b_s( x_s, u_s)^\top p_s \dd s - \int_{t}^{T} \nabla_x f_s( x_s, u_s) \dd s, \quad \forall t \in [0,T]. \nonumber
    \end{align}
    Taking norms and applying Assumption~\ref{asp:main_asp} together with \eqref{eq:bdd_x},
    \begin{align}
        |p_t| & \leq |\nabla_x g(x_T)| + \int_{t }^{T} |\nabla_x b_s(x_s, u_s)| \cdot |p_s| \dd s + \int_{t }^{T} |\nabla_x f_s(x_s, u_s)| \dd s \nonumber \\
        & \leq M + M \int_{t }^{T} |p_s| \dd s + M(T-t), \quad \forall t \in [0,T]. \nonumber
    \end{align}
    By Gr\"onwall's inequality,
    \begin{align}
        |p_t| & \leq M (1 + T-t) \cdot e^{M(T-t)} \leq M(1+T)e^{MT} = M_P, \quad \forall t \in [0,T]. \nonumber \qedhere
    \end{align}
\end{proof}

\smallskip 

We now state the necessary condition for being an optimal control.

\begin{theorem}[Pontryagin's maximum principle] \label{thm:cont_max_prin}
    Let $(u^*,x^*)$ be an optimal pair for Problem (\textbf{P}). Under Assumption~\ref{asp:main_asp}, there exists $p^*: [0,T] \to \mathbb{R}^d$ satisfying the adjoint equation \eqref{eq:adjoint} associated with $(u^*, x^*)$ and
    \begin{align} \label{eq:max_cond_gen}
                  \mathcal{H}_t^\tau(x^*_t, p^*_t, u^*_t) = \max_{u \in U} \mathcal{H}_t^\tau(x^*_t, p^*_t, u), \quad \text{for a.e. } t \in [0,T].
    \end{align}
\end{theorem}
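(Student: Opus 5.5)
We prove the maximum principle by the classical needle (spike) variation argument, adapted to the continuous control class $\mathcal U=C([0,T];U)$. We take $p^*$ to be the solution of \eqref{eq:adjoint} along $(u^*,x^*)$; it exists, is unique and is bounded by Lemma~\ref{lmm:a_priori}. Since $u^*,x^*,p^*$ are continuous and $\mathcal H^\tau$ is continuous in $(t,x,p,u)$, the map $t\mapsto \mathcal H^\tau_t(x^*_t,p^*_t,v)-\mathcal H^\tau_t(x^*_t,p^*_t,u^*_t)$ is continuous for each fixed $v\in U$. It therefore suffices to show that for every $s\in(0,T)$ and every $v\in U$,
\[
\mathcal H^\tau_s(x^*_s,p^*_s,v)\le \mathcal H^\tau_s(x^*_s,p^*_s,u^*_s).
\]
Continuity in $t$ then gives the inequality on all of $[0,T]$, and in particular a.e.

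Fix $s$ and $v$. Because admissible controls must be continuous, a genuine spike cannot be used, so we use a continuous needle. For small $\varepsilon>0$ let $\chi_\varepsilon:[0,T]\to[0,1]$ be continuous, equal to $1$ on $[s,s+\varepsilon]$, supported in $[s-\varepsilon^2,s+\varepsilon+\varepsilon^2]$, and set
\[
u^\varepsilon_t=(1-\chi_\varepsilon(t))u^*_t+\chi_\varepsilon(t)v .
\]
Convexity of $U$ gives $u^\varepsilon\in\mathcal U$. Let $x^\varepsilon$ be the corresponding state. Since $u^\varepsilon$ and $u^*$ differ only on a set of measure $\varepsilon+O(\varepsilon^2)$, and $b$ is Lipschitz with bounded values on bounded sets, Gr\"onwall gives $\sup_t|x^\varepsilon_t-x^*_t|=O(\varepsilon)$.

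The key step is an exact duality identity. Set $\delta x=x^\varepsilon-x^*$ and compute $\frac{d}{dt}(p^*_t\cdot\delta x_t)$ using \eqref{eq:state} and \eqref{eq:adjoint}. Integrating over $[0,T]$, with $\delta x_0=0$ and $p^*_T=-\nabla g(x^*_T)$, and Taylor expanding $g$ and $f,b$ in $x$ with the Lipschitz gradients from Assumption~\ref{asp:main_asp}(3)--(4), we aim for
\[
J^\tau(u^\varepsilon)-J^\tau(u^*)=-\int_0^T\bigl[\mathcal H^\tau_t(x^*_t,p^*_t,u^\varepsilon_t)-\mathcal H^\tau_t(x^*_t,p^*_t,u^*_t)\bigr]\dd t+R_\varepsilon .
\]
The remainder $R_\varepsilon$ collects two kinds of terms. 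The first are quadratic terms $O(|\delta x|^2)$ integrated over $[0,T]$, which are $O(\varepsilon^2)$. The second are cross terms of the form $(\nabla_x\varphi_t(x^*,u^\varepsilon)-\nabla_x\varphi_t(x^*,u^*))\delta x$, which are nonzero only on the support of the needle and hence are $O(\varepsilon)\cdot O(\varepsilon)$. Thus $R_\varepsilon=o(\varepsilon)$.

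Next we use optimality of $u^*$, so the left side is $\ge 0$. The integrand is bounded, so the contribution from the $\varepsilon^2$-transition layers is $O(\varepsilon^2)$. On $[s,s+\varepsilon]$ the integrand equals $\mathcal H^\tau_t(x^*_t,p^*_t,v)-\mathcal H^\tau_t(x^*_t,p^*_t,u^*_t)$, which is continuous in $t$. Dividing by $\varepsilon$ and letting $\varepsilon\to0$ gives the desired inequality at $s$.

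The main obstacle is controlling $R_\varepsilon$ carefully. The expansions must be in $x$ only, not in $u$, since the control perturbation is large in size even though its support is small. Boundedness of $\mathcal H^\tau$ and its $x$-derivatives on the relevant set follows from Lemma~\ref{lmm:a_priori} together with continuity of $h$ on the bounded range of $u^*$ and $v$. I expect this bookkeeping, together with the requirement that the needle be continuous, to be where the care is needed.
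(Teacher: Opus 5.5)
Your proof is correct, and it supplies an argument the paper does not give. The paper omits the proof and cites the spike-variation argument in Yong--Zhou, Chapter 3, which is formulated for measurable controls. You run that classical argument but adapt it to the paper's admissible class $\mathcal U=C([0,T];U)$, and the adaptation is substantive.

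Since $u^*$ is only assumed optimal among continuous controls, a discontinuous spike is not an admissible competitor. Quoting the textbook theorem would therefore first require showing that $u^*$ remains optimal among bounded measurable controls, e.g.\ via Lusin's theorem and $L^1$-continuity of $J^\tau$. Such textbook statements also typically assume the running cost is bounded uniformly in $u$, which $f+\tau h$ need not be when $U$ is unbounded.

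Your continuous needle $u^\varepsilon=(1-\chi_\varepsilon)u^*+\chi_\varepsilon v$ sidesteps both issues. Convexity of $U$ keeps it in $\mathcal U$. All controls involved take values in the compact convex hull of $\{v\}\cup u^*([0,T])$, so the $\varepsilon^2$ transition layers contribute only $O(\varepsilon^2)$. Your bookkeeping is also sound: expanding only in $x$, using the Lipschitz bounds on $\nabla_x b,\nabla_x f,\nabla g$ together with $|p^*_t|\le M_P$, and noting that the cross terms are supported on the needle, does give $R_\varepsilon=O(\varepsilon^2)$. As a bonus, continuity of the data in $t$ yields the maximum condition at every $t$, not just almost everywhere. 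The only price is carrying out these estimates yourself rather than quoting them.

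Your choice of a needle over the convex perturbation of Lemma~\ref{lem:first-variation} is also the right one. The convex perturbation only yields $\nabla_u\mathcal H^\tau_t(x^*_t,p^*_t,u^*_t)\cdot(v-u^*_t)\le 0$, which implies the full maximum condition only under concavity in $u$.
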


The condition \eqref{eq:max_cond_gen} is called the \textit{maximum condition}. The proof of Theorem~\ref{thm:cont_max_prin} is standard and hence omitted, see e.g., \cite[Chapter 3]{yong1999stochastic}.

\section{First Variation and Bregman Mirror Descent}\label{sec:first_var}

\subsection{First variation formula}

We now derive a first variation formula for $J^{\tau}$ that expresses its derivative in terms of the gradient of the Hamiltonian with respect to the control variable.

\begin{lemma}[First variation formula]\label{lem:first-variation}
Under Assumption~\ref{asp:main_asp}, for every $u,v\in\mathcal U$,
the one-sided directional derivative of $J^0$ at $u$ along the feasible segment from $u$ to $v$ exists and is given by
\begin{equation}\label{eq:first-var-J0}
dJ^0(u)(v-u)
=
-\int_0^T \nabla_u \mathcal H_t^0(x_t,p_t,u_t)\cdot (v_t-u_t)\dd t,
\end{equation}
where $(x, p)$ is the state-adjoint pair corresponding to $u$.

Moreover, the corresponding one-sided directional derivative of $J^\tau$ is
\begin{equation}\label{eq:first-var-Jtau}
dJ^\tau(u)(v-u)
=
- \int_0^T \nabla_u \mathcal H_t^\tau(x_t,p_t,u_t)\cdot (v_t-u_t)\dd t,
\end{equation}
where
\[
\nabla_u \mathcal H_t^\tau(x,p,u)
=
\nabla_u \mathcal H_t^0(x,p,u) - \tau \nabla h(u).
\]
\end{lemma}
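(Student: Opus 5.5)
We use the standard perturbation argument along the convex segment. Fix $u,v\in\mathcal U$ and, for $\varepsilon\in(0,1]$, set $u^\varepsilon:=u+\varepsilon(v-u)$. Since $U$ is convex, $u^\varepsilon\in\mathcal U$. Let $x^\varepsilon$ be the corresponding state, which exists and is unique by Lemma~\ref{lmm:a_priori}. We first show that $y^\varepsilon:=(x^\varepsilon-x)/\varepsilon$ converges uniformly on $[0,T]$ to the solution $y$ of the linearized equation
\[
\dot y_t=\nabla_x b_t(x_t,u_t)\,y_t+\nabla_u b_t(x_t,u_t)\,(v_t-u_t),\qquad y_0=0 .
\]
Gr\"onwall with the Lipschitz bound in Assumption~\ref{asp:main_asp}(2) gives $\sup_t|x^\varepsilon_t-x_t|\le C\varepsilon\sup_t|v_t-u_t|$, where $\sup_t|v_t-u_t|<\infty$ by continuity. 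Next write
\[
b_t(x^\varepsilon_t,u^\varepsilon_t)-b_t(x_t,u_t)=\int_0^1 \big[\nabla_x b_t(\xi^\theta_t)(x^\varepsilon_t-x_t)+\nabla_u b_t(\xi^\theta_t)\,\varepsilon(v_t-u_t)\big]\dd\theta,
\]
where $\xi^\theta_t$ is the convex combination of $(x_t,u_t)$ and $(x^\varepsilon_t,u^\varepsilon_t)$. This combination stays in $\mathbb R^d\times U$ by convexity of $U$. The difference $y^\varepsilon-y$ then satisfies a linear equation whose forcing is bounded by $C\varepsilon$: use the Lipschitz continuity of $\nabla_x b$ from Assumption~\ref{asp:main_asp}(3), and for $\nabla_u b$ use boundedness of $\nabla^2_{xu}b$ (from the Remark) and of $\nabla^2_{uu}b$. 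A second Gr\"onwall then gives $\sup_t|y^\varepsilon_t-y_t|=O(\varepsilon)$.

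With this in hand, expand the cost. By the same mean-value argument for $f$ and the $C^1$ Lipschitz gradient of $g$, dominated or uniform convergence gives
\[
\lim_{\varepsilon\downarrow0}\frac{J^0(u^\varepsilon)-J^0(u)}{\varepsilon}=\int_0^T\big[\nabla_x f_t\cdot y_t+\nabla_u f_t\cdot(v_t-u_t)\big]\dd t+\nabla g(x_T)\cdot y_T .
\]
This shows the one-sided derivative exists. We then remove $y$ by duality with the adjoint. Since $p_T=-\nabla g(x_T)$ and $y_0=0$, we have $\nabla g(x_T)\cdot y_T=-\int_0^T\frac{\dd}{\dd t}(p_t\cdot y_t)\dd t$. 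By \eqref{eq:adjoint}, $\dot p_t=-\nabla_x b_t^\top p_t+\nabla_x f_t$, and so
\[
\frac{\dd}{\dd t}(p_t\cdot y_t)=\nabla_x f_t\cdot y_t+p_t\cdot\nabla_u b_t(v_t-u_t).
\]
Substituting, the $\nabla_x f\cdot y$ terms cancel, and what remains is $-\int_0^T\big(\nabla_u b_t^\top p_t-\nabla_u f_t\big)\cdot(v_t-u_t)\dd t$. This is \eqref{eq:first-var-J0}, by the definition \eqref{eq:H0} of $\mathcal H^0$.

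For $J^\tau$ it remains to differentiate $H(u^\varepsilon)$. Since $h\in C^1(U)$ and $u,v$ are continuous on the compact interval, $\varepsilon\mapsto h(u_t+\varepsilon(v_t-u_t))$ has difference quotients converging uniformly in $t$ to $\nabla h(u_t)\cdot(v_t-u_t)$. Here we use uniform continuity of $\nabla h$ on the compact convex hull of the ranges, which lies in $U$. Hence $dH(u)(v-u)=\int_0^T\nabla h(u_t)\cdot(v_t-u_t)\dd t$. Adding $\tau$ times this to \eqref{eq:first-var-J0} yields \eqref{eq:first-var-Jtau} with $\nabla_u\mathcal H^\tau=\nabla_u\mathcal H^0-\tau\nabla h$.

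The main obstacle is the first step, the uniform convergence of $y^\varepsilon$ to $y$. There one must check that the remainder terms are genuinely $o(1)$ using only the stated bounds. We will handle it with the integral-form mean-value identity and the Lipschitz bounds on $\nabla_x b$, $\nabla_u b$ (via $\nabla^2_{xu}b$, $\nabla^2_{uu}b$), which in fact give $O(\varepsilon)$ remainders. Everything else is bookkeeping.
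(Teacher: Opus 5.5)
Your proposal is correct and follows essentially the same route as the paper. You perturb along the segment, show via Gr\"onwall that the state difference quotient converges uniformly to the linearized state $y$, expand the cost, eliminate $y$ through the duality identity for $\frac{\dd}{\dd t}(p_t\cdot y_t)$, and then add $\tau\,dH(u)(v-u)$. The differences are minor: you control $y^\varepsilon-y$ directly through the integral mean-value form instead of the paper's remainder $r^\varepsilon=x^\varepsilon-x-\varepsilon y$ with a quadratic Taylor bound, and you justify the derivative of $H$ more carefully using uniform continuity of $\nabla h$ on a compact convex subset of $U$.
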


The proof of Lemma~\ref{lem:first-variation} is deferred to Appendix~\ref{sec:prf}.

\subsection{Bregman divergence and the mirror descent algorithm}\label{sec:div_alg}

We next introduce the Bregman divergence induced by the convex regularizer and formulate a mirror descent algorithm, based on Pontryagin's maximum principle, for approximating the optimal control.

\begin{defn}
Assume $h:U\to\mathbb R$ is convex and differentiable. The Bregman divergence induced by $h$ is
\begin{equation}\label{eq:bregman-pointwise}
D_h(v\mid u)
:=
h(v)-h(u)-\nabla h(u)\cdot (v-u),
\qquad u,v\in U.
\end{equation}
The corresponding integrated version on the control space is
\begin{equation}\label{eq:bregman-path}
\mathcal{D}_h(v\mid u)
:=
\int_0^T D_h(v_t\mid u_t)\dd t, \qquad u, v \in \mathcal{U}.
\end{equation}
\end{defn}

To ensure sufficient curvature of the mirror map and enable descent estimates for the algorithm, we additionally assume that $h$ is uniformly convex.

\begin{assump}[Uniform convexity of the mirror map]\label{ass:h-unif-conv}
Assume that $h:U\to \mathbb R$ is continuously differentiable and there exists $\sigma_h>0$ such that
for all $u,v\in U$,
\begin{equation}\label{eq:h-unif-conv}
D_h(v\mid u)\ge \frac{\sigma_h}{2}|v-u|^2.
\end{equation}
Consequently,
\begin{equation}\label{eq:H-unif-conv}
\mathcal{D}_h(v\mid u)\ge \frac{\sigma_h}{2}\|v-u\|_{L^2}^2,
\qquad \forall u,v\in \mathcal U.
\end{equation}
\end{assump}

Algorithm~\ref{alg:mirror-pmp} gives the mirror descent method for optimal control. The method can be viewed as a mirror-descent variant of the classical method of successive approximations.

\begin{algorithm}[H]
\caption{Mirror Descent Algorithm for Optimal Control}
\label{alg:mirror-pmp}
\begin{algorithmic}[1]

\STATE \textbf{Input:} $\lambda > 0$, initial guess $u^0 \in \mathcal U$

\FOR{$n = 0,1,2,\dots$}

    \STATE \textbf{Step 1 (State equation):} Solve
    \[
    \dot x_t^n = b_t(x_t^n,u_t^n),
    \qquad x_0^n = x_0.
    \]

    \STATE \textbf{Step 2 (Adjoint equation):} Solve
    \[
    \dot p_t^n = -\nabla_x \mathcal H_t^0(x_t^n,p_t^n,u_t^n),
    \qquad
    p_T^n = - \nabla g(x_T^n).
    \]

    \STATE \textbf{Step 3 (Mirror update):} For every $t \in [0,T]$, compute
    \begin{align}\label{eq:mirror-update}
        u_t^{n+1}
    & \in
    \arg\max_{v\in U}
    \Big\{
    \nabla_u \mathcal H_t^\tau(x_t^n,p_t^n,u_t^n)\cdot (v-u_t^n)
    -
    \lambda D_h(v\mid u_t^n)
    \Big\} \\
    & \label{eq:mirror-update-expanded}
    =
    \arg\max_{v\in U}
    \Big\{
    \nabla_u \mathcal H_t^0(x_t^n,p_t^n,u_t^n)\cdot (v-u_t^n)
    -
    \tau \nabla h(u_t^n)\cdot (v-u_t^n)
    -
    \lambda D_h(v\mid u_t^n)
    \Big\}.
    \end{align}

\ENDFOR

\end{algorithmic}
\end{algorithm}

\begin{remark}
   Euclidean projected gradient descent is a special case of mirror descent. To see this, take
\[
h(u)=\frac12 |u|^2.
\]
Then
\[
D_h(v\mid u)=\frac12|v-u|^2,
\qquad
\nabla h(u)=u.
\]
Hence the mirror step \eqref{eq:mirror-update-expanded} becomes
\begin{equation}\label{eq:euclidean-prox}
u_t^{n+1}
\in
\arg\max_{v\in U}
\Big\{
\nabla_u \mathcal H_t^0(x_t^n,p_t^n,u_t^n)\cdot (v-u_t^n)
-
\tau u_t^n\cdot (v-u_t^n)
-
\frac{\lambda}{2}|v-u_t^n|^2
\Big\}.
\end{equation}
Equivalently,
\begin{equation}\label{eq:proj-gradient}
u_t^{n+1}
=
\Pi_U\left(
u_t^n + \frac{1}{\lambda}\nabla_u \mathcal H_t^\tau(x_t^n,p_t^n,u_t^n)
\right).
\end{equation}
If $U=\mathbb R^m$, then
\begin{equation}\label{eq:plain-gradient}
u_t^{n+1}
=
\left(1-\frac{\tau}{\lambda}\right)u_t^n
+\frac{1}{\lambda}\nabla_u \mathcal H_t^0(x_t^n,p_t^n,u_t^n).
\end{equation}
Thus Euclidean projected gradient descent is the special case of mirror descent associated with the quadratic mirror map $h(u)=\tfrac12|u|^2$.
\end{remark}

\begin{lemma}[Preservation of admissibility]
Assume that Assumptions~\ref{asp:main_asp} and~\ref{ass:h-unif-conv} hold. Let $\{u^n\}_{n\geq 0}$ be the control sequence generated by Algorithm~\ref{alg:mirror-pmp}. If $u^0 \in \mathcal U$, then $u^n \in \mathcal U$ for all $n \ge 0$.
\end{lemma}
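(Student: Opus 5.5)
We argue by induction on $n$. The case $n=0$ is the hypothesis $u^0\in\mathcal U$. Assume $u^n\in\mathcal U=C([0,T];U)$. By Lemma~\ref{lmm:a_priori}, the state $x^n$ and adjoint $p^n$ exist, are unique, and are continuous on $[0,T]$. It then suffices to show two things: the mirror step \eqref{eq:mirror-update} has a unique maximizer $u^{n+1}_t\in U$ for every $t$, and the map $t\mapsto u^{n+1}_t$ is continuous.

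\emph{Existence and uniqueness.} Fix $t$ and write $a_t:=\nabla_u\mathcal H^\tau_t(x^n_t,p^n_t,u^n_t)\in\mathbb R^m$. Then $u^{n+1}_t$ maximizes
\[
\Phi_t(v):=a_t\cdot(v-u^n_t)-\lambda D_h(v\mid u^n_t)
\]
over $v\in U$. Since $h$ is $C^1$ and $D_h(\cdot\mid u^n_t)$ is $h$ minus an affine function, $\Phi_t$ is continuous on the closed set $U$. By Assumption~\ref{ass:h-unif-conv},
\[
\Phi_t(v)\le |a_t||v-u^n_t|-\tfrac{\lambda\sigma_h}{2}|v-u^n_t|^2,
\]
so $\Phi_t$ is coercive (tends to $-\infty$) and attains its maximum on $U$. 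Strong concavity of $\Phi_t$, inherited from the $\sigma_h$-strong convexity of $h$ on the convex set $U$, makes the maximizer unique. The same estimate also gives the a priori bound $|u^{n+1}_t-u^n_t|\le 2|a_t|/(\lambda\sigma_h)$.

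\emph{Continuity, the main step.} First, $t\mapsto a_t$ is continuous. This holds because $\nabla_u b$ and $\nabla_u f$ are jointly continuous by Assumption~\ref{asp:main_asp}(2)--(3), $\nabla h$ is continuous, and $x^n,p^n,u^n$ are continuous. Next, write $v_s:=u^{n+1}_s$, $w_s:=u^n_s$, and compare the maximizers at times $s$ and $t$ using the first-order optimality conditions. The optimality condition at $s$ reads
\[
\bigl(a_s-\lambda(\nabla h(v_s)-\nabla h(w_s))\bigr)\cdot(z-v_s)\le 0\quad\text{for all } z\in U,
\]
and the analogous condition holds at $t$. Take $z=v_t$ in the condition at $s$ and $z=v_s$ in the condition at $t$, then add the two inequalities. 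The strong monotonicity of $\nabla h$ on $U$ (equivalent to Assumption~\ref{ass:h-unif-conv}) then yields
\[
\lambda\sigma_h|v_t-v_s|^2\le \bigl(a_t-a_s-\lambda(\nabla h(w_t)-\nabla h(w_s))\bigr)\cdot(v_t-v_s),
\]
and hence
\[
|v_t-v_s|\le \tfrac{1}{\lambda\sigma_h}|a_t-a_s|+\tfrac{1}{\sigma_h}|\nabla h(w_t)-\nabla h(w_s)|.
\]
The right-hand side tends to $0$ as $s\to t$, since $\nabla h$ is continuous and $w=u^n$ is continuous. Thus $u^{n+1}\in C([0,T];U)$, which closes the induction.

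The point requiring care is the use of the variational inequalities at points $v_s$ on the boundary of $U$. Differentiability of $h$ on $U$ is meant one-sidedly along feasible directions, as in the definition of $D_h$. Because $U$ is convex, the first-order condition for maximizing the concave $C^1$ function $\Phi_s$ over $U$ still holds in the variational-inequality form used above.
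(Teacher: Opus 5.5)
Your proof is correct and is essentially the paper's argument. The paper rewrites the step as maximizing $\eta_t\cdot v-\lambda h(v)$ with $\eta_t=a_t+\lambda\nabla h(u^n_t)$, gets existence and uniqueness from coercivity and strong convexity, and adds the two variational inequalities to obtain $|u^{n+1}_t-u^{n+1}_s|\le(\lambda\sigma_h)^{-1}|\eta_t-\eta_s|$, which is your bound before you split $\eta$. One small slip: in your summed inequality the term $\lambda(\nabla h(w_t)-\nabla h(w_s))$ should carry a plus sign, which is harmless because you take absolute values immediately afterward.
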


\begin{proof}
Suppose that $u^n \in \mathcal{U}$. By definition of the mirror step \eqref{eq:mirror-update}, for each $t \in [0,T]$ we have
$u_t^{n+1} \in U$.

Set
\[
a_t := \nabla_u \mathcal H_t^\tau(x_t^n, p_t^n, u_t^n),
\qquad
\eta_t := a_t + \lambda \nabla h(u_t^n).
\]
Then \eqref{eq:mirror-update} is equivalent to
\[
u_t^{n+1} \in \arg\max_{v \in U} \bigl\{  \eta_t \cdot v - \lambda h(v) \bigr\}.
\]
By Assumption~\ref{ass:h-unif-conv}, $h$ is $\sigma_h$-strongly convex. The same lower quadratic bound on $h$ makes $\lambda h(v)-\eta_t\cdot v$ coercive, so the maximizer exists even when $U$ is unbounded; strong convexity gives uniqueness.
It satisfies the variational inequality
\[
\bigl( \eta_t - \lambda \nabla h(u_t^{n+1}) \bigr)\cdot (w - u_t^{n+1}) \le 0,
\qquad \forall\, w \in U.
\]

Let $s,t \in [0,T]$. Testing the above inequality at time $t$ with $w=u_s^{n+1}$ and at time $s$ with $w=u_t^{n+1}$, then adding the two inequalities, we obtain
\[
\lambda \bigl(\nabla h(u_t^{n+1}) - \nabla h(u_s^{n+1})\bigr)
\cdot (u_t^{n+1} - u_s^{n+1})
\le
(\eta_t - \eta_s)\cdot (u_t^{n+1} - u_s^{n+1}).
\]
Assumption~\ref{ass:h-unif-conv} implies
\[
(\nabla h(u_t^{n+1})-\nabla h(u_s^{n+1}))\cdot
(u_t^{n+1}-u_s^{n+1})
\ge \sigma_h |u_t^{n+1}-u_s^{n+1}|^2.
\]
Hence
\[
\lambda\sigma_h |u_t^{n+1}-u_s^{n+1}|^2
\le (\eta_t-\eta_s)\cdot (u_t^{n+1}-u_s^{n+1}).
\]
By the Cauchy--Schwarz inequality,
\[
\lambda\sigma_h |u_t^{n+1}-u_s^{n+1}|^2
\le |\eta_t-\eta_s|\,|u_t^{n+1}-u_s^{n+1}|,
\]
hence
\[
|u_t^{n+1} - u_s^{n+1}|
\le
\frac{1}{\lambda \sigma_h} |\eta_t - \eta_s|.
\]

Since $u^n$, $x^n$, and $p^n$ are continuous and $\nabla_u \mathcal H_t^\tau(x,p,u)$ is continuous under Assumption~\ref{asp:main_asp}, the map $t \mapsto \eta_t$ is continuous. Therefore $t \mapsto u_t^{n+1}$ is continuous on $[0,T]$, so $u^{n+1} \in C([0,T];U) = \mathcal U$. The conclusion follows by induction on $n$.
\end{proof}

In the following sections, we analyze the convergence properties of the mirror descent algorithm. Although the maximum principle motivates the update, the convergence analysis relies only on the first-variation representation of $J$ in terms of the Hamiltonian gradient, as in Lemma~\ref{lem:first-variation}.

\section{Relative Smoothness}\label{sec:re_smooth}

This section shows that, under Assumption~\ref{ass:h-unif-conv}, the cost functional satisfies a relative smoothness estimate derived from the first variation formula above.

\begin{lemma}[Stability of the state and adjoint]\label{lem:state-adjoint-stability}
Under Assumption~\ref{asp:main_asp}, there exist constants $C_x, C_p >0$ such that for all $u,v\in\mathcal U$, if $x^u,x^v$ are the corresponding state trajectories and $p^u,p^v$ are the corresponding adjoints, then
\begin{equation}\label{eq:state-stability}
\|x^v-x^u\|_{L^2}
\le
C_x \,\|v-u\|_{L^2},
\end{equation}
and
\begin{equation}\label{eq:adjoint-stability}
\|p^v-p^u\|_{L^2}
\le
C_p \,\|v-u\|_{L^2}.
\end{equation}
\end{lemma}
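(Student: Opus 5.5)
The plan is a standard Grönwall argument, applied first to the state difference and then to the adjoint difference. Throughout we use the uniform bounds of Lemma~\ref{lmm:a_priori}.

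\textbf{State estimate.} Set $\delta x_t := x^v_t - x^u_t$. Subtracting the integral forms of \eqref{eq:state} and using the Lipschitz bound in Assumption~\ref{asp:main_asp}(2) gives
\[
|\delta x_t| \le \int_0^t M\bigl(|\delta x_s| + |v_s-u_s|\bigr)\dd s .
\]
By Grönwall,
\[
|\delta x_t| \le M e^{MT}\int_0^t |v_s-u_s|\dd s \le M e^{MT}\sqrt{T}\,\|v-u\|_{L^2}.
\]
This bound is pointwise in $t$, so in fact $\sup_t|\delta x_t| \lesssim \|v-u\|_{L^2}$. Integrating its square over $[0,T]$ yields \eqref{eq:state-stability} with $C_x = M T e^{MT}$. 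We keep the sup-norm bound as well, since the terminal term needs it.

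\textbf{Adjoint estimate.} Set $\delta p_t := p^v_t - p^u_t$. The integral form of \eqref{eq:adjoint} gives
\[
\delta p_t = -\bigl(\nabla g(x^v_T)-\nabla g(x^u_T)\bigr) + \int_t^T \Bigl(\nabla_x b_s(x^v_s,v_s)^\top p^v_s - \nabla_x b_s(x^u_s,u_s)^\top p^u_s\Bigr)\dd s - \int_t^T\Bigl(\nabla_x f_s(x^v_s,v_s)-\nabla_x f_s(x^u_s,u_s)\Bigr)\dd s .
\]
We split the middle integrand as
\[
\nabla_x b_s(x^v_s,v_s)^\top \delta p_s + \bigl(\nabla_x b_s(x^v_s,v_s)-\nabla_x b_s(x^u_s,u_s)\bigr)^\top p^u_s .
\]
The first piece is bounded by $M|\delta p_s|$, because $\nabla_x b$ is bounded by $M$ (Remark after Assumption~\ref{asp:main_asp}). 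The second piece is bounded by $M M_P(|\delta x_s|+|v_s-u_s|)$, using the Lipschitz bound in Assumption~\ref{asp:main_asp}(3) together with $|p^u_s|\le M_P$ from Lemma~\ref{lmm:a_priori}. The $f$ term is bounded by $M(|\delta x_s|+|v_s-u_s|)$, and the terminal term by $M|\delta x_T|$. Combining these,
\[
|\delta p_t| \le M|\delta x_T| + M\int_t^T|\delta p_s|\dd s + M(1+M_P)\int_0^T\bigl(|\delta x_s|+|v_s-u_s|\bigr)\dd s .
\]

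\textbf{Closing the adjoint bound.} The backward Grönwall inequality now gives
\[
\sup_t|\delta p_t| \le e^{MT}\Bigl(M|\delta x_T| + M(1+M_P)\bigl(\sqrt{T}\,\|\delta x\|_{L^2} + \sqrt{T}\,\|v-u\|_{L^2}\bigr)\Bigr).
\]
Each term on the right is controlled by the state estimate: $|\delta x_T|$ via the sup-norm bound, and $\|\delta x\|_{L^2}$ via \eqref{eq:state-stability}. Hence $\sup_t|\delta p_t|\le C\|v-u\|_{L^2}$, and multiplying by $\sqrt{T}$ gives \eqref{eq:adjoint-stability} with an explicit $C_p$ depending on $M$, $T$ and $M_P$.

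\textbf{Main obstacle.} The delicate point is the terminal term $\nabla g(x^v_T)-\nabla g(x^u_T)$. Since $\|\delta x\|_{L^2}$ does not control the value at $t=T$, an $L^2$ state bound alone would not suffice. This is why the state estimate must be carried out pointwise/sup-norm, with Cauchy--Schwarz applied to $\int_0^t|v_s-u_s|\dd s$, rather than only in $L^2$. A second point to handle carefully is the product term in the adjoint equation: it requires the uniform adjoint bound $M_P$ from Lemma~\ref{lmm:a_priori}, and without it the estimate would not be linear in $\|v-u\|_{L^2}$.
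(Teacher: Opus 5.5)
Your proposal is correct and follows essentially the same route as the paper: a forward Gr\"onwall bound on $\delta x$ in sup-norm (via Cauchy--Schwarz on $\int_0^t|v_s-u_s|\,\mathrm{d}s$), which also controls the terminal term, then the same splitting of $\nabla_x b^\top p$ using $|p^u_s|\le M_P$, followed by a backward Gr\"onwall bound on $\delta p$. Your bookkeeping reproduces the paper's constants exactly, namely $C_x = MTe^{MT}$ and $C_p = e^{MT}M\bigl(C_x + (M_P+1)(C_x+1)T\bigr)$.
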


\begin{proof}
Fix $u,v\in\mathcal U$ and write
\[
\delta u := v-u,
\qquad
\delta x_t := x_t^v-x_t^u,
\qquad
\delta p_t := p_t^v-p_t^u.
\]

\smallskip

\noindent
\textit{Step 1: Estimate for the state.}
By the state equation,
\[
\dot{\delta x}_t
=
b_t(x_t^v,v_t)-b_t(x_t^u,u_t).
\]
Hence, by Assumption~\ref{asp:main_asp} (2),
\[
|\delta x_t|
\le
\int_0^t |b_s(x_s^v,v_s)-b_s(x_s^u,u_s)|\dd s
\le
M\int_0^t \bigl(|\delta x_s|+|\delta u_s|\bigr)\dd s.
\]
By Gr\"onwall's inequality,
\[
\sup_{t \in [0,T]} |\delta x_t|
\le
M e^{MT}\|\delta u\|_{L^1}
\le M \sqrt{T} e^{MT} \|\delta u\|_{L^2}.
\]
Hence
\[
\| \delta x\|_{L^2} \leq \sqrt{T} \sup_{t \in [0,T]} |\delta x_t| \leq MT e^{MT} \| \delta u\|_{L^2}.
\]
Defining $C_x := M T e^{MT} $ proves \eqref{eq:state-stability}.

\smallskip

\noindent
\textit{Step 2: Estimate for the adjoint.}
Recall that
\[
\dot p_t^u
=
-\nabla_x b_t(x_t^u,u_t)^\top p_t^u + \nabla_x f_t(x_t^u,u_t),
\qquad
p_T^u= - \nabla g(x_T^u),
\]
and similarly for $p^v$. Therefore
\[
\begin{aligned}
\dot{\delta p}_t
&=
-\nabla_x b_t(x_t^v,v_t)^\top p_t^v+\nabla_x b_t(x_t^u,u_t)^\top p_t^u
+ \bigl(\nabla_x f_t(x_t^v,v_t)-\nabla_x f_t(x_t^u,u_t)\bigr) \\
&=
-\nabla_x b_t(x_t^v,v_t)^\top \delta p_t
-\bigl(\nabla_x b_t(x_t^v,v_t)-\nabla_x b_t(x_t^u,u_t)\bigr)^\top p_t^u \\
&\quad
+\bigl(\nabla_x f_t(x_t^v,v_t)-\nabla_x f_t(x_t^u,u_t)\bigr).
\end{aligned}
\]
Also,
\[
\delta p_T
=
- (\nabla g(x_T^v)-\nabla g(x_T^u)).
\]
Hence, using Assumption~\ref{asp:main_asp} (2)--(4) and Lemma~\ref{lmm:a_priori}, for every $t\in[0,T]$,
\[
\begin{aligned}
|\delta p_t|
&\le
|\delta p_T|
+
\int_t^T |\nabla_x b_s(x_s^v,v_s)|\,|\delta p_s|\,\dd s \\
&\quad
+
\int_t^T
\bigl|
\nabla_x b_s(x_s^v,v_s)-\nabla_x b_s(x_s^u,u_s)
\bigr|\,|p_s^u|\,\dd s
+
\int_t^T
\bigl|
\nabla_x f_s(x_s^v,v_s)-\nabla_x f_s(x_s^u,u_s)
\bigr|\,\dd s \\
&\le
M|\delta x_T|
+
M\int_t^T |\delta p_s|\,\dd s
+
M(M_P+1)\int_t^T \bigl(|\delta x_s|+|\delta u_s|\bigr)\,\dd s.
\end{aligned}
\]
Using \textit{Step 1} and Gr\"onwall's inequality, we obtain
\[
\| \delta p\|_{L^2} \leq \sqrt{T} \sup_{t \in [0,T]} |\delta p_t|
\le
e^{MT} M \Big(C_x + (M_P+1) (C_x +1)T  \Big) \|\delta u\|_{L^2} =: C_p \|\delta u\|_{L^2}.
\]
This proves \eqref{eq:adjoint-stability}.
\end{proof}

\begin{lemma}[Relative smoothness]\label{lem:relative-smoothness}
Under Assumptions~\ref{asp:main_asp} and~\ref{ass:h-unif-conv}, there exists a positive constant
$L$ such that for all $u,v\in\mathcal U$,
\begin{equation}\label{eq:relative-smoothness}
J^\tau(v)-J^\tau(u)
\le
dJ^\tau(u)(v-u)+L\,\mathcal{D}_h(v\mid u).
\end{equation}
Equivalently, by Lemma~\ref{lem:first-variation},
\begin{equation}\label{eq:relative-smoothness-H}
J^\tau(v)-J^\tau(u)
\le
-\int_0^T \nabla_u \mathcal H_t^\tau(x_t^u,p_t^u,u_t)\cdot (v_t-u_t)\dd t
+
L\int_0^T D_h(v_t\mid u_t)\dd t.
\end{equation}
\end{lemma}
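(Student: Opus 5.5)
First split off the regularizer. Since $J^\tau=J^0+\tau H$ and $h$ does not depend on the state,
\[
\tau\bigl(H(v)-H(u)\bigr)-\tau\int_0^T\nabla h(u_t)\cdot(v_t-u_t)\dd t=\tau\,\mathcal D_h(v\mid u)
\]
holds exactly. So it suffices to prove the unregularized estimate $J^0(v)-J^0(u)-dJ^0(u)(v-u)\le C\|v-u\|_{L^2}^2$ for a constant $C$ that does not depend on $u$ or $v$. Assumption~\ref{ass:h-unif-conv} then gives $C\|v-u\|_{L^2}^2\le \frac{2C}{\sigma_h}\mathcal D_h(v\mid u)$, and the lemma follows with $L:=2C/\sigma_h+\tau$.

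To bound the remainder of $J^0$, I would use an exact adjoint identity rather than integrating the first variation along the segment. Set $\delta x=x^v-x^u$, and let $p=p^u$ be the adjoint for $u$. Using $p_T=-\nabla g(x^u_T)$, write
\[
g(x^v_T)-g(x^u_T)=\nabla g(x^u_T)\cdot\delta x_T+R_g=-p_T\cdot\delta x_T+R_g,
\]
with $|R_g|\le \frac M2|\delta x_T|^2$ by Assumption~\ref{asp:main_asp}(4). Next, integration by parts gives
\[
p_T\cdot\delta x_T=\int_0^T\bigl(\dot p_t\cdot\delta x_t+p_t\cdot\dot{\delta x}_t\bigr)\dd t,
\]
where $\dot{\delta x}_t=b_t(x^v_t,v_t)-b_t(x^u_t,u_t)$ and $\dot p_t=-\nabla_x\mathcal H^0_t(x^u_t,p_t,u_t)$. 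Combining these with the running cost, all terms assemble into
\[
J^0(v)-J^0(u)=-\int_0^T\Bigl[\mathcal H^0_t(x^v_t,p_t,v_t)-\mathcal H^0_t(x^u_t,p_t,u_t)-\nabla_x\mathcal H^0_t(x^u_t,p_t,u_t)\cdot\delta x_t\Bigr]\dd t+R_g.
\]
Subtracting $dJ^0(u)(v-u)=-\int_0^T\nabla_u\mathcal H^0_t\cdot\delta u_t\dd t$ from Lemma~\ref{lem:first-variation} leaves minus the integral of the second-order Taylor remainder of $(x,u)\mapsto\mathcal H^0_t(x,p_t,u)$ at $(x^u_t,u_t)$, plus $R_g$.

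The remaining step is to bound that Taylor remainder by a constant times $|\delta x_t|^2+|\delta u_t|^2$. This is the step I expect to need the most care, because $\nabla_u\varphi$ is not assumed Lipschitz in $x$; only the Hessian blocks are bounded. I would handle it with the integral form of Taylor's theorem along the segment $(x^u_t+s\delta x_t,\,u_t+s\delta u_t)$. This segment stays in $\mathbb R^d\times U$ because $U$ is convex. Along it, the Hessian of $p\cdot b_t-f_t$ splits into $xx$, $xu$ and $uu$ blocks. By the Remark after Assumption~\ref{asp:main_asp} and by Assumption~\ref{asp:main_asp}(3), these blocks are bounded by $M(M_P+1)$ for the $xx$ and $xu$ blocks and by $M_PM_{b,uu}+M_{f,uu}$ for the $uu$ block; the bound $|p_t|\le M_P$ comes from Lemma~\ref{lmm:a_priori}. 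Hence the remainder is at most $C_1(|\delta x_t|^2+|\delta u_t|^2)$.

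Finally, I would control the state terms by the control difference. Step 1 of the proof of Lemma~\ref{lem:state-adjoint-stability} gives $\sup_t|\delta x_t|\le M\sqrt T e^{MT}\|\delta u\|_{L^2}$. This handles both $\|\delta x\|_{L^2}^2$ and $|\delta x_T|^2$, which yields the constant $C$ and completes the proof.
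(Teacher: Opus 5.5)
Your proposal is correct, but it follows a different route from the paper's proof. The paper applies the first-variation formula at every point $u^\theta=u+\theta(v-u)$ of the segment and integrates in $\theta$. The unregularized remainder then becomes
\[
-\int_0^1\!\!\int_0^T\bigl(\nabla_u\mathcal H^0_t(x^\theta_t,p^\theta_t,u^\theta_t)-\nabla_u\mathcal H^0_t(x^u_t,p^u_t,u_t)\bigr)\cdot\delta u_t\dd t\dd\theta .
\]
The paper bounds this using a Lipschitz estimate for $\nabla_u\mathcal H^0_t$ jointly in $(x,p,u)$, together with the stability of \emph{both} the state and the adjoint with respect to the control (Lemma~\ref{lem:state-adjoint-stability}). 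You instead freeze the adjoint $p^u$ and use the exact duality identity. This is the same integration-by-parts computation the paper carries out in the proof of Theorem~\ref{thm:relative-convexity}. The remainder is then a pointwise second-order Taylor remainder of $(x,u)\mapsto\mathcal H^0_t(x,p^u_t,u)$ plus the terminal remainder $R_g$. Your identity and your Hessian block bounds are right, and they use exactly the ingredients behind the paper's constant $C_H$.

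Your route avoids three things the paper needs: the adjoint-stability estimate, the first variation at intermediate controls, and the continuity-in-$\theta$ fundamental-theorem-of-calculus step. You need only the a priori bound $|p^u_t|\le M_P$ from Lemma~\ref{lmm:a_priori}, the $M$-Lipschitz gradient of $g$, and state stability, so your constant $L$ has no $C_p$ term. Your route also shows relative smoothness and relative convexity as the two-sided bounds of a single identity: Hessian bounds give one side, concavity of $\mathcal H^0_t$ gives the other.

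The paper's route, in turn, effectively shows along the way that the $L^2$-gradient $u\mapsto-\nabla_u\mathcal H^\tau_t(x^u_t,p^u_t,u_t)$ is Lipschitz in $L^2$. That is the generic descent-lemma template: it uses only the first-variation formula plus stability estimates. It therefore extends most directly to settings where an exact frozen-adjoint second-order expansion of the Hamiltonian is less convenient.
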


\begin{proof}
Fix $u,v\in\mathcal U$ and set
\[
\delta u := v-u.
\]
For each $\theta\in[0,1]$, define
\[
u^\theta := u+\theta \delta u.
\]
Let $x^\theta$ be the state corresponding to $u^\theta$, and let $p^\theta$ be the adjoint associated with
$(x^\theta,u^\theta)$.

\smallskip

\noindent
\textit{Step 1: Fundamental theorem of calculus along the segment.}
Define
\[
\Phi(\theta):=J^\tau(u^\theta), \qquad \theta\in[0,1].
\]
For $\theta\in[0,1)$, Lemma~\ref{lem:first-variation} applied at the base point $u^\theta$ in the direction $\delta u$ gives
\[
\Phi'_+(\theta)=dJ^\tau(u^\theta)(\delta u)
=
-\int_0^T
\nabla_u \mathcal H_t^\tau(x_t^\theta,p_t^\theta,u_t^\theta)\cdot \delta u_t\dd t.
\]
The right-hand side is continuous in $\theta$, so the fundamental theorem of calculus along the segment yields
\begin{equation}\label{eq:FTC-Jtau}
J^\tau(v)-J^\tau(u)
=
\int_0^1 \Phi'_+(\theta)\dd\theta
=
-\int_0^1\int_0^T
\nabla_u \mathcal H_t^\tau(x_t^\theta,p_t^\theta,u_t^\theta)\cdot \delta u_t\dd t\dd\theta.
\end{equation}

Subtracting \eqref{eq:first-var-Jtau} from \eqref{eq:FTC-Jtau} gives
\begin{equation}\label{eq:decomp-J}
\begin{aligned}
J^\tau(v)-J^\tau(u)-dJ^\tau(u)(v-u)
&=
-\int_0^1\int_0^T
\Bigl(
\nabla_u \mathcal H_t^\tau(x_t^\theta,p_t^\theta,u_t^\theta)
-
\nabla_u \mathcal H_t^\tau(x_t^u,p_t^u,u_t)
\Bigr)\cdot \delta u_t\dd t\dd \theta.
\end{aligned}
\end{equation}

\smallskip

\noindent
\textit{Step 2: Split the regularized and unregularized parts.}
Since
\[
\nabla_u \mathcal H_t^\tau(x,p,u)=\nabla_u \mathcal H_t^0(x,p,u) - \tau \nabla h(u),
\]
the right-hand side of \eqref{eq:decomp-J} equals
\[
I_1+I_2,
\]
where
\[
I_1:=
-\int_0^1\int_0^T
\Bigl(
\nabla_u \mathcal H_t^0(x_t^\theta,p_t^\theta,u_t^\theta)
-
\nabla_u \mathcal H_t^0(x_t^u,p_t^u,u_t)
\Bigr)\cdot \delta u_t\dd t\dd \theta,
\]
and
\[
I_2:=
\tau\int_0^1\int_0^T
\bigl(\nabla h(u_t^\theta)-\nabla h(u_t)\bigr)\cdot \delta u_t\dd t\dd \theta.
\]

By the fundamental theorem of calculus applied pointwise to $h$,
\[
h(v_t)-h(u_t)-\nabla h(u_t)\cdot (v_t-u_t)
=
\int_0^1
\bigl(\nabla h(u_t^\theta)-\nabla h(u_t)\bigr)\cdot \delta u_t\dd\theta.
\]
Integrating in time yields
\[
I_2
=
\tau \mathcal{D}_h(v\mid u).
\]

\smallskip

\noindent
\textit{Step 3: Estimate the unregularized remainder $I_1$.}
We first write
\[
\nabla_u \mathcal H_t^0(x,p,u)
=
\nabla_u b_t(x,u)^\top p - \nabla_u f_t(x,u).
\]
Fix $t\in[0,T]$ and two triples $(x,p,u)$, $(\tilde x,\tilde p,\tilde u)$ satisfying
\[
|x|,|\tilde x|\le M_X,
\qquad
|p|,|\tilde p|\le M_P.
\]
Then
\[
\begin{aligned}
&\nabla_u \mathcal H_t^0(x,p,u)-\nabla_u \mathcal H_t^0(\tilde x,\tilde p,\tilde u) \\
&=
\bigl(\nabla_u b_t(x,u)-\nabla_u b_t(\tilde x,\tilde u)\bigr)^\top p
+\nabla_u b_t(\tilde x,\tilde u)^\top (p-\tilde p)
-\bigl(\nabla_u f_t(x,u)-\nabla_u f_t(\tilde x,\tilde u)\bigr).
\end{aligned}
\]
Hence,
\[
\begin{aligned}
&\bigl|
\nabla_u \mathcal H_t^0(x,p,u)-\nabla_u \mathcal H_t^0(\tilde x,\tilde p,\tilde u)
\bigr| \\
&\le
|p|\,
\bigl\|\nabla_u b_t(x,u)-\nabla_u b_t(\tilde x,\tilde u)\bigr\|
+
\|\nabla_u b_t(\tilde x,\tilde u)\| \,|p-\tilde p|
+
\bigl\|\nabla_u f_t(x,u)-\nabla_u f_t(\tilde x,\tilde u)\bigr\|.
\end{aligned}
\]
By the mean value theorem,
\[
\bigl\|\nabla_u b_t(x,u)-\nabla_u b_t(\tilde x,\tilde u)\bigr\|
\le
M|x-\tilde x|+M_{b,uu}|u-\tilde u|,
\]
and similarly,
\[
\bigl\|\nabla_u f_t(x,u)-\nabla_u f_t(\tilde x,\tilde u)\bigr\|
\le
M|x-\tilde x|+M_{f,uu}|u-\tilde u|.
\]
Therefore,
\[
\begin{aligned}
&\bigl|
\nabla_u \mathcal H_t^0(x,p,u)-\nabla_u \mathcal H_t^0(\tilde x,\tilde p,\tilde u)
\bigr| \\
&\le
M_P\bigl(M|x-\tilde x|+M_{b,uu}|u-\tilde u|\bigr)
+
M|p-\tilde p|
+
M|x-\tilde x|+M_{f,uu}|u-\tilde u| \\
&=
M \bigl(M_P +1\bigr)|x-\tilde x|
+
M|p-\tilde p|
+
\bigl(M_P M_{b,uu}+M_{f,uu}\bigr)|u-\tilde u|.
\end{aligned}
\]
In summary, for all $t\in[0,T]$ and all
$(x,p,u),(\tilde x,\tilde p,\tilde u)$ with $|x|,|\tilde x|\le M_X$ and $|p|,|\tilde p|\le M_P$,
\begin{equation}\label{eq:H-grad-Lip}
\bigl|
\nabla_u \mathcal H_t^0(x,p,u)-\nabla_u \mathcal H_t^0(\tilde x,\tilde p,\tilde u)
\bigr|
\le
C_H\bigl(|x-\tilde x|+|p-\tilde p|+|u-\tilde u|\bigr),
\end{equation}
where
\[
C_H
:=
\max\Bigl\{
M(M_P+1),
\,
M_P M_{b,uu}+M_{f,uu}
\Bigr\}.
\]

Applying \eqref{eq:H-grad-Lip} with $(x,p,u)=(x_t^\theta,p_t^\theta,u_t^\theta)$ and
$(\tilde x,\tilde p,\tilde u)=(x_t^u,p_t^u,u_t)$, we obtain
\[
\begin{aligned}
|I_1|
&\le
C_H\int_0^1\int_0^T
\Bigl(
|x_t^\theta-x_t^u|
+
|p_t^\theta-p_t^u|
+
|u_t^\theta-u_t|
\Bigr)|\delta u_t|\dd t\dd \theta.
\end{aligned}
\]
By the Cauchy--Schwarz inequality,
\begin{align*}
    & \int_0^T
\Bigl(
|x_t^\theta-x_t^u|
+
|p_t^\theta-p_t^u|
+
|u_t^\theta-u_t|
\Bigr)|\delta u_t|\dd t \\
& \qquad \leq \Bigl( \| x^\theta - x^u\|_{L^2} + \| p^\theta - p^u \|_{L^2} + \theta \|\delta u\|_{L^2} \Bigr) \| \delta u\|_{L^2}.
\end{align*}
By Lemma~\ref{lem:state-adjoint-stability},
\[
\|x^\theta-x^u\|_{L^2}
+
\|p^\theta-p^u\|_{L^2}
\le
(C_x + C_p) \|u^\theta-u\|_{L^2}
=
\theta (C_x+ C_p) \|\delta u\|_{L^2}.
\]
Hence
\[
\begin{aligned}
|I_1|
&\le
C_H\int_0^1
\theta (C_x + C_p + 1) \| \delta u\|_{L^2}^2 \dd\theta
=
\dfrac{C_H}{2} (C_x + C_p+ 1) \| \delta u\|_{L^2}^2.
\end{aligned}
\]
Therefore, by \eqref{eq:H-unif-conv},
\[
|I_1|
\le
\frac{C_H(C_x + C_p +1)}{\sigma_h} \mathcal{D}_h(v\mid u).
\]

\smallskip

\noindent
\textit{Step 4: Conclusion.}
Combining the estimates for $I_1$ and $I_2$ with \eqref{eq:decomp-J}, we obtain
\[
J^\tau(v)-J^\tau(u)-dJ^\tau(u)(v-u)
\le
\left(
\tau+\frac{C_H(C_x + C_p +1)}{\sigma_h}
\right)\mathcal{D}_h(v\mid u).
\]
Thus \eqref{eq:relative-smoothness} holds with
\[
L
:=
\tau+\frac{C_H(C_x + C_p +1)}{\sigma_h}.
\]
This completes the proof.
\end{proof}

\section{Energy Dissipation}\label{sec:energy}

Using Lemma~\ref{lem:relative-smoothness}, we now show that the cost functional decreases monotonically along the iterates $\{u^n\}_{n \geq 0}$.

\begin{theorem}[Energy dissipation]\label{thm:energy}
Under Assumptions~\ref{asp:main_asp} and~\ref{ass:h-unif-conv}, let $\lambda\ge L$ and let $\{u^n\}_{n\ge 0}$ be the sequence generated by Algorithm~\ref{alg:mirror-pmp}. Then
\begin{equation}\label{eq:bregman-diss}
J^\tau(u^n)-J^\tau(u^{n+1})
\ge
(\lambda-L)\,\mathcal{D}_h(u^{n+1}\mid u^n) \geq 0, \qquad n\ge 0.
\end{equation}
In particular, if $\lambda > L$ and $\inf_{u\in\mathcal U}J^\tau(u)>-\infty$, then
\begin{equation}\label{eq:bregman-to-zero}
\mathcal{D}_h(u^{n+1}\mid u^n)\to 0.
\end{equation}
\end{theorem}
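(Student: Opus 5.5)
The plan is to combine the relative smoothness estimate of Lemma~\ref{lem:relative-smoothness} with the optimality of $u^{n+1}$ in the mirror step \eqref{eq:mirror-update}. I will compare $u^{n+1}$ against the trivial competitor $v=u^n$ and integrate in time.

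First I apply \eqref{eq:relative-smoothness-H} with $u=u^n$, $v=u^{n+1}$. Writing $a_t^n:=\nabla_u \mathcal H_t^\tau(x_t^n,p_t^n,u_t^n)$, this gives
\[
J^\tau(u^{n+1})-J^\tau(u^n)\le -\int_0^T a_t^n\cdot(u_t^{n+1}-u_t^n)\dd t + L\,\mathcal D_h(u^{n+1}\mid u^n).
\]
Next, for each $t$ the objective in \eqref{eq:mirror-update} at $v=u^{n+1}_t$ is at least its value at $v=u^n_t$. That value is zero, since $D_h(u_t^n\mid u_t^n)=0$. Hence
\[
a_t^n\cdot(u_t^{n+1}-u_t^n)\ge \lambda D_h(u_t^{n+1}\mid u_t^n)\quad\text{for every } t\in[0,T].
\]
Integrating over $[0,T]$ is legitimate, because all quantities are continuous in $t$ by the admissibility-preservation lemma and Lemma~\ref{lmm:a_priori}. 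This yields $\int_0^T a^n_t\cdot(u^{n+1}_t-u^n_t)\dd t\ge \lambda\mathcal D_h(u^{n+1}\mid u^n)$. Substituting into the first inequality gives $J^\tau(u^n)-J^\tau(u^{n+1})\ge(\lambda-L)\mathcal D_h(u^{n+1}\mid u^n)$. Nonnegativity follows from $\lambda\ge L$ together with $D_h\ge 0$, which holds by convexity of $h$ (indeed by Assumption~\ref{ass:h-unif-conv}).

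For \eqref{eq:bregman-to-zero}, I sum the dissipation inequality from $n=0$ to $N-1$ and telescope:
\[
(\lambda-L)\sum_{n=0}^{N-1}\mathcal D_h(u^{n+1}\mid u^n)\le J^\tau(u^0)-J^\tau(u^N)\le J^\tau(u^0)-\inf_{\mathcal U}J^\tau<\infty.
\]
Since $\lambda>L$, the series of nonnegative terms converges, so its terms tend to zero.

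There is no real obstacle. The only point needing care is that the comparison with $v=u^n_t$ uses only the maximizing property of $u^{n+1}_t$, not uniqueness. The pointwise inequality must also be integrable, which continuity of $u^{n+1}$ guarantees. Finiteness of $J^\tau(u^0)$ is immediate from Lemma~\ref{lmm:a_priori} and the continuity assumptions.
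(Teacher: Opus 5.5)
Your proposal is correct and matches the paper's proof step for step. Both apply relative smoothness at $(u^n,u^{n+1})$, compare the mirror-step objective with the competitor $v=u^n_t$ (whose value is zero), integrate in time, and then telescope the resulting inequality to obtain summability of $\mathcal{D}_h(u^{n+1}\mid u^n)$.
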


\begin{proof}
Apply \eqref{eq:relative-smoothness-H} with $u=u^n$ and $v=u^{n+1}$: 
\[
J^\tau(u^{n+1})-J^\tau(u^n)
\le
-\int_0^T \nabla_u \mathcal H_t^\tau(x_t^n,p_t^n,u_t^n)\cdot (u_t^{n+1}-u_t^n)\dd t
+
L \mathcal{D}_h(u^{n+1}\mid u^n).
\]
By the definition of the mirror step \eqref{eq:mirror-update}, for every $t\in[0,T]$,
\begin{multline*}
\nabla_u \mathcal H_t^\tau(x_t^n,p_t^n,u_t^n)\cdot (u_t^{n+1}-u_t^n)
- \lambda D_h(u_t^{n+1}\mid u_t^n) \ge \\
\ge
\nabla_u \mathcal H_t^\tau(x_t^n,p_t^n,u_t^n)\cdot (u_t^n-u_t^n)
- \lambda D_h(u_t^n\mid u_t^n)=0.
\end{multline*}
Integrating over $t$ yields
\[
\int_0^T \nabla_u \mathcal H_t^\tau(x_t^n,p_t^n,u_t^n)\cdot (u_t^{n+1}-u_t^n)\dd t
\ge
\lambda \mathcal{D}_h(u^{n+1}\mid u^n).
\]
Substituting this estimate back into the relative smoothness inequality,
\[
J^\tau(u^{n+1})-J^\tau(u^n)
\le
-(\lambda-L)\mathcal{D}_h(u^{n+1}\mid u^n)\le 0,
\]
since $\lambda\ge L$. This proves \eqref{eq:bregman-diss}. 

If, in addition, $\lambda > L$ and $\inf_{u\in\mathcal U}J^\tau(u)>-\infty$, then summing in $n$ gives
\[
\sum_{n=0}^{N-1} (\lambda-L) \mathcal{D}_h(u^{n+1}\mid u^n)
\le
J^\tau(u^0)-J^\tau(u^N)
\le
J^\tau(u^0)-\inf_{u\in\mathcal U}J^\tau(u).
\]
Since $\lambda - L >0$, we conclude that $\sum_{n\ge 0} \mathcal{D}_h(u^{n+1}\mid u^n)<\infty$ and, in particular,
\[
\mathcal{D}_h(u^{n+1}\mid u^n)\to 0. \qedhere
\]
\end{proof}

\section{Relative Convexity}\label{sec:re_convex}

In this section, we establish a relative convexity property of the cost functional with modulus $\tau$.

\begin{assump}[Convexity of the control problem]\label{ass:convexity}
The terminal cost $g$ is convex, and for each $t\in[0,T]$ and $p\in\mathbb R^d$ the map
\[
(x,u)\mapsto \mathcal H_t^0(x,p,u)
\]
is concave.
\end{assump}

\begin{remark}
While the uniform concavity condition seems restrictive, it is a standard assumption to guarantee the sufficiency of the Pontryagin's maximum principle, see \cite[Chapter 3, Theorem 2.5]{yong1999stochastic}. 
\end{remark}

\begin{theorem}[Relative convexity]\label{thm:relative-convexity}
Suppose that Assumptions~\ref{asp:main_asp} and~\ref{ass:convexity} hold. Then, for all $u,v\in\mathcal U$,
\begin{equation}\label{eq:relative-convexity}
J^\tau(v)-J^\tau(u)
\ge
dJ^\tau(u)(v-u)+\tau \mathcal{D}_h(v\mid u).
\end{equation}
Equivalently, by Lemma~\ref{lem:first-variation},
\begin{equation}\label{eq:relative-convexity-H}
J^\tau(v)-J^\tau(u)
\ge
- \int_0^T \nabla_u \mathcal H_t^\tau(x_t,p_t,u_t)\cdot (v_t-u_t)\dd t
+
\tau\int_0^T D_h(v_t\mid u_t)\dd t,
\end{equation}
where $(x,p)$ is the state-adjoint pair associated with $u$.
\end{theorem}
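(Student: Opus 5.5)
The plan is the standard sufficiency argument for the maximum principle, made quantitative. First I reduce to $\tau=0$. Since $J^\tau=J^0+\tau H$ and $dJ^\tau(u)(v-u)=dJ^0(u)(v-u)+\tau\int_0^T\nabla h(u_t)\cdot(v_t-u_t)\dd t$ (Lemma~\ref{lem:first-variation}), the identity
\[
\tau H(v)-\tau H(u)-\tau\int_0^T\nabla h(u_t)\cdot(v_t-u_t)\dd t=\tau\mathcal D_h(v\mid u)
\]
shows that \eqref{eq:relative-convexity} is equivalent to the convexity inequality
\[
J^0(v)-J^0(u)\ge -\int_0^T\nabla_u\mathcal H^0_t(x_t,p_t,u_t)\cdot(v_t-u_t)\dd t .
\]

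To prove this inequality, let $(x,p)$ be the state--adjoint pair of $u$ and $x^v$ the state of $v$, and set $\delta x:=x^v-x$, so that $\delta x_0=0$. By convexity of $g$ and $p_T=-\nabla g(x_T)$,
\[
g(x^v_T)-g(x_T)\ge \nabla g(x_T)\cdot\delta x_T=-p_T\cdot\delta x_T .
\]
Because $p$ and $\delta x$ are $C^1$, integration by parts gives
\[
-p_T\cdot\delta x_T=-\int_0^T\bigl(\dot p_t\cdot\delta x_t+p_t\cdot\dot{\delta x}_t\bigr)\dd t .
\]
Now substitute $\dot p_t=-\nabla_x\mathcal H^0_t(x_t,p_t,u_t)$ and $\dot{\delta x}_t=b_t(x^v_t,v_t)-b_t(x_t,u_t)$. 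Adding the running-cost difference $\int_0^T\bigl(f_t(x^v_t,v_t)-f_t(x_t,u_t)\bigr)\dd t$, and using $p_t\cdot b_t-f_t=\mathcal H^0_t$ with the same fixed $p_t$, we obtain
\[
J^0(v)-J^0(u)\ge -\int_0^T\Bigl[\mathcal H^0_t(x^v_t,p_t,v_t)-\mathcal H^0_t(x_t,p_t,u_t)-\nabla_x\mathcal H^0_t(x_t,p_t,u_t)\cdot\delta x_t\Bigr]\dd t .
\]

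By Assumption~\ref{ass:convexity}, the map $(x,u)\mapsto\mathcal H^0_t(x,p_t,u)$ is concave for the fixed $p_t$. Hence
\[
\mathcal H^0_t(x^v_t,p_t,v_t)-\mathcal H^0_t(x_t,p_t,u_t)\le \nabla_x\mathcal H^0_t\cdot\delta x_t+\nabla_u\mathcal H^0_t\cdot(v_t-u_t),
\]
with all gradients evaluated at $(x_t,p_t,u_t)$. So the bracket is at most $\nabla_u\mathcal H^0_t(x_t,p_t,u_t)\cdot(v_t-u_t)$, and inserting this bound gives exactly the $\tau=0$ inequality. The differentiability needed here comes from Assumption~\ref{asp:main_asp}(3).

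The main point of care is the concavity step. The gradient inequality for a concave function applies on the product domain $\mathbb R^d\times U$, with $U$ convex and the function $C^1$ there; if $U$ has empty interior, the $u$-gradient is understood as the one-sided derivative along the segment, consistent with Lemma~\ref{lem:first-variation}. The sign conventions ($p_T=-\nabla g$, $\mathcal H^0=p\cdot b-f$) also need tracking so that the $\nabla_x$ terms cancel exactly. Otherwise the argument is routine.
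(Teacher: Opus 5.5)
Your proof is correct and follows essentially the same route as the paper: convexity of $g$ with $p_T=-\nabla g(x_T)$, integration by parts against the adjoint of $u$, and the joint concavity of $(x,u)\mapsto\mathcal H^0_t(x,p_t,u)$ to absorb the $\nabla_x$ term. The only difference is cosmetic. You split off the $\tau\mathcal D_h(v\mid u)$ term at the start via the Bregman identity, whereas the paper carries $\tau\int_0^T\bigl(h(u_t)-h(v_t)\bigr)\dd t$ through the argument and applies that identity at the end.
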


Thus the regularization parameter $\tau$ serves as a modulus of relative convexity with respect to the Bregman geometry induced by $h$.

\begin{proof}
Let $x$ and $x'$ be the states associated with $u$ and $v$, respectively, and let $p$ be the adjoint associated with $u$:
\[
\dot p_t=-\nabla_x \mathcal H_t^0(x_t,p_t,u_t),\qquad p_T= -\nabla g(x_T).
\]
We start from
\[
J^\tau(u)-J^\tau(v)
=
g(x_T)-g(x_T')
+\int_0^T \bigl(f_t(x_t,u_t)-f_t(x_t',v_t)\bigr)\dd t
+\tau \int_0^T \bigl(h(u_t)-h(v_t)\bigr)\dd t.
\]
By convexity of $g$,
\[
g(x_T)-g(x_T')\le \nabla g(x_T)\cdot(x_T-x_T')= -p_T\cdot(x_T-x_T').
\]
Using integration by parts and the state and adjoint equations,
\[
p_T\cdot(x_T-x_T')
=
\int_0^T p_t\cdot(\dot x_t-\dot x_t')\dd t
-
\int_0^T \nabla_x \mathcal H_t^0(x_t,p_t,u_t)\cdot(x_t-x_t')\dd t.
\]
Since $\dot x_t=b_t(x_t,u_t)$ and $\dot x_t'=b_t(x_t',v_t)$, we obtain
\[
p_T\cdot(x_T-x_T')
=
\int_0^T \Bigl[
p_t\cdot\bigl(b_t(x_t,u_t)-b_t(x_t',v_t)\bigr)
-
\nabla_x \mathcal H_t^0(x_t,p_t,u_t)\cdot(x_t-x_t')
\Bigr]\dd t.
\]
Therefore
\begin{align*}
J^\tau(u)-J^\tau(v)
&\le
- \int_0^T
\Bigl[
\mathcal H_t^0(x_t,p_t,u_t)-\mathcal H_t^0(x_t',p_t,v_t)
+
\nabla_x \mathcal H_t^0(x_t,p_t,u_t)\cdot(x_t'-x_t)
\Bigr]\dd t \\
&\quad
+\tau \int_0^T \bigl(h(u_t)-h(v_t)\bigr)\dd t.
\end{align*}
Now use the concavity of $(x,u)\mapsto \mathcal H_t^0(x,p_t,u)$:
\[
\mathcal H_t^0(x_t',p_t,v_t)
\le
\mathcal H_t^0(x_t,p_t,u_t)
+\nabla_x \mathcal H_t^0(x_t,p_t,u_t)\cdot(x_t'-x_t)
+\nabla_u \mathcal H_t^0(x_t,p_t,u_t)\cdot(v_t-u_t).
\]
Rearranging,
\[
\mathcal H_t^0(x_t,p_t,u_t)-\mathcal H_t^0(x_t',p_t,v_t)
+\nabla_x \mathcal H_t^0(x_t,p_t,u_t)\cdot(x_t'-x_t)
\ge
-\nabla_u \mathcal H_t^0(x_t,p_t,u_t)\cdot(v_t-u_t).
\]
Hence
\[
J^\tau(u)-J^\tau(v)
\le
\int_0^T \nabla_u \mathcal H_t^0(x_t,p_t,u_t)\cdot(v_t-u_t)\dd t
+\tau \int_0^T \bigl(h(u_t)-h(v_t)\bigr)\dd t.
\]
Using the identity
\[
h(v_t)-h(u_t)
=
D_h(v_t\mid u_t)+\nabla h(u_t)\cdot (v_t-u_t),
\]
that is,
\[
h(u_t)-h(v_t)
=
- D_h(v_t\mid u_t) - \nabla h(u_t)\cdot (v_t-u_t),
\]
we obtain
\[
J^\tau(u)-J^\tau(v)
\le
\int_0^T
\bigl(\nabla_u \mathcal H_t^0(x_t,p_t,u_t) - \tau \nabla h(u_t)\bigr)\cdot(v_t-u_t)\dd t
-
\tau \int_0^T D_h(v_t\mid u_t)\dd t,
\]
that is,
\[
J^\tau(v)-J^\tau(u)
\ge
-\int_0^T
\bigl(\nabla_u \mathcal H_t^0(x_t,p_t,u_t) - \tau \nabla h(u_t)\bigr)\cdot(v_t-u_t)\dd t
+
\tau \int_0^T D_h(v_t\mid u_t)\dd t.
\]
Since
\[
\nabla_u \mathcal H_t^\tau(x_t,p_t,u_t)
=
\nabla_u \mathcal H_t^0(x_t,p_t,u_t) - \tau \nabla h(u_t),
\]
the claimed inequality \eqref{eq:relative-convexity-H} follows.
\end{proof}

\section{Convergence Rates}\label{sec:conv_rates}

We now combine the relative smoothness and relative convexity properties to obtain convergence rates for the mirror descent algorithm.

\begin{lemma}[Three-point inequality]\label{lem:three-point}
Fix $u,\bar u,w\in U$ and $\xi\in\mathbb R^m$. Suppose that
\[
\bar u \in \arg\max_{v\in U}\bigl\{\xi\cdot (v-u) - \lambda D_h(v\mid u)\bigr\}.
\]
Then
\begin{equation}\label{eq:three-point}
\xi\cdot (w-u)- \lambda D_h(w\mid u)
\le
\xi\cdot (\bar u-u) - \lambda D_h(\bar u\mid u) - \lambda D_h(w\mid \bar u).
\end{equation}
\end{lemma}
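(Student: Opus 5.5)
The plan is to combine the first-order optimality condition for $\bar u$ with the Bregman three-point identity. The objective $\phi(v):=\xi\cdot(v-u)-\lambda D_h(v\mid u)$ is concave and differentiable on $U$, with gradient $\nabla\phi(v)=\xi-\lambda\bigl(\nabla h(v)-\nabla h(u)\bigr)$. Since $\bar u$ maximizes $\phi$ over the closed convex set $U$, it satisfies the variational inequality
\[
\bigl(\xi-\lambda\nabla h(\bar u)+\lambda\nabla h(u)\bigr)\cdot(w-\bar u)\le 0,\qquad \forall\,w\in U.
\]
This is the same inequality used in the admissibility lemma. To justify it, note that $\phi(\bar u+s(w-\bar u))\le\phi(\bar u)$ for $s\in(0,1]$ by convexity of $U$. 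Dividing by $s$ and letting $s\downarrow0$, which is allowed because $h$ is $C^1$ on $U$, gives the inequality.

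Next, use the three-point identity for Bregman divergences, which follows by direct expansion of the definition:
\[
D_h(w\mid u)=D_h(w\mid\bar u)+D_h(\bar u\mid u)+\bigl(\nabla h(\bar u)-\nabla h(u)\bigr)\cdot(w-\bar u).
\]
To verify it, expand the right-hand side. The $h(\bar u)$ terms cancel, the $\nabla h(\bar u)$ terms cancel, and the remaining $\nabla h(u)$ terms combine to $-\nabla h(u)\cdot(w-u)$, which is exactly the left-hand side.

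Now compute the difference of the two sides of \eqref{eq:three-point}:
\[
\bigl[\xi\cdot(w-u)-\lambda D_h(w\mid u)\bigr]-\bigl[\xi\cdot(\bar u-u)-\lambda D_h(\bar u\mid u)-\lambda D_h(w\mid\bar u)\bigr]
\]
\[
=\xi\cdot(w-\bar u)-\lambda\bigl(\nabla h(\bar u)-\nabla h(u)\bigr)\cdot(w-\bar u).
\]
This is the left-hand side of the variational inequality, so it is $\le 0$, and \eqref{eq:three-point} follows.

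The only delicate point is justifying the variational inequality when $\bar u$ lies on the boundary of $U$, or when $h$ is differentiable only relative to $U$. The one-sided derivative argument above handles both cases using only the continuous differentiability in Assumption~\ref{asp:main_asp}(5). Existence of $\bar u$ is assumed in the statement, so no coercivity argument is needed.
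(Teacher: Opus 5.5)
Your proof is correct and is the same as the paper's: the first-order variational inequality at $\bar u$, combined with the Bregman three-point identity $D_h(w\mid u)-D_h(\bar u\mid u)-D_h(w\mid \bar u)=(\nabla h(\bar u)-\nabla h(u))\cdot(w-\bar u)$. The only difference is that you justify the variational inequality explicitly via the one-sided derivative along the feasible segment, which the paper simply asserts.
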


\begin{proof}
Define
\[
\Phi(v):=\xi\cdot (v-u) - \lambda D_h(v\mid u)
=
\xi\cdot(v-u) - \lambda h(v) + \lambda h(u) + \lambda \nabla h(u)\cdot(v-u).
\]
Since $h$ is convex, $\Phi$ is concave, with
\[
\nabla \Phi(v)=\xi - \lambda \nabla h(v) + \lambda \nabla h(u).
\]
By optimality of $\bar u$ and the first-order variational inequality,
\[
\bigl(\xi - \lambda \nabla h(\bar u) + \lambda \nabla h(u)\bigr)\cdot (w-\bar u)\le 0
\qquad \forall w\in U.
\]
Rearranging,
\[
\xi\cdot (w-\bar u)
\le
\lambda\bigl(\nabla h(\bar u)-\nabla h( u)\bigr)\cdot (w-\bar u).
\]
Add $\xi\cdot(\bar u-u)$ to both sides to obtain
\[
\xi\cdot (w-u)
\le
\xi\cdot(\bar u-u) +
\lambda\bigl(\nabla h(\bar u)-\nabla h(u)\bigr)\cdot (w-\bar u).
\]
Now apply the standard Bregman identity
\[
D_h(w\mid u)-D_h(\bar u\mid u)-D_h(w\mid \bar u)
=
\bigl(\nabla h(\bar u)-\nabla h(u)\bigr)\cdot (w-\bar u),
\]
which yields \eqref{eq:three-point}.
\end{proof}

\begin{theorem}[Convergence rates]\label{thm:rates}
Under Assumptions~\ref{asp:main_asp},~\ref{ass:h-unif-conv}, and~\ref{ass:convexity}, let $\lambda\ge L$, where $L$ is the relative-smoothness constant in Lemma~\ref{lem:relative-smoothness}. Let $\{u^n\}_{n\ge 0}$ be the sequence generated by Algorithm~\ref{alg:mirror-pmp}.

\begin{itemize}
\item[(i)] If $\tau=0$, then for any $v\in\mathcal U$ such that $\mathcal{D}_h(v\mid u^n)<\infty$ for all $n$,
\[
J^0(u^n)-J^0(v)\le \frac{\lambda}{n}\mathcal{D}_h(v\mid u^0),\qquad n\ge 1.
\]

\item[(ii)] If $\tau>0$ and $u^\ast$ minimizes $J^\tau$ over $\mathcal U$, then
\[
0\le J^\tau(u^n)-J^\tau(u^\ast)
\le
\lambda\Bigl(1-\frac{\tau}{\lambda}\Bigr)^{n-1} \mathcal{D}_h(u^\ast\mid u^0),
\qquad n\ge 1.
\]
\end{itemize}
\end{theorem}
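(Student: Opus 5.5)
The plan is the standard relative-smoothness/relative-convexity argument for mirror descent (Lu--Freund--Nesterov style). We combine Lemma~\ref{lem:relative-smoothness}, Theorem~\ref{thm:relative-convexity}, and the three-point inequality of Lemma~\ref{lem:three-point}, applied pointwise in $t$ and then integrated.

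\textbf{Step 1: one-step inequality.} Fix $v\in\mathcal U$ and $n\ge 0$. Write $\xi_t:=\nabla_u\mathcal H_t^\tau(x_t^n,p_t^n,u_t^n)$.
\begin{enumerate}
\item Relative smoothness \eqref{eq:relative-smoothness-H} with $u=u^n$, $v=u^{n+1}$, together with $\lambda\ge L$, gives
\[
J^\tau(u^{n+1})\le J^\tau(u^n)-\int_0^T\xi_t\cdot(u^{n+1}_t-u^n_t)\dd t+\lambda\mathcal D_h(u^{n+1}\mid u^n).
\]
\item Since $u^{n+1}_t$ is the maximizer in \eqref{eq:mirror-update}, Lemma~\ref{lem:three-point} with $u=u^n_t$, $\bar u=u^{n+1}_t$, $w=v_t$, integrated in $t$, yields
\[
-\int\xi\cdot(u^{n+1}-u^n)+\lambda\mathcal D_h(u^{n+1}\mid u^n)\le -\int\xi\cdot(v-u^n)+\lambda\mathcal D_h(v\mid u^n)-\lambda\mathcal D_h(v\mid u^{n+1}).
\]
\item Relative convexity \eqref{eq:relative-convexity-H} at $u=u^n$ gives
\[
-\int\xi\cdot(v-u^n)\le J^\tau(v)-J^\tau(u^n)-\tau\mathcal D_h(v\mid u^n).
\]
\end{enumerate}
Chaining the three produces the key recursion
\[
J^\tau(u^{n+1})-J^\tau(v)\le(\lambda-\tau)\mathcal D_h(v\mid u^n)-\lambda\mathcal D_h(v\mid u^{n+1}).
\]
The finiteness hypothesis on $\mathcal D_h(v\mid u^n)$ is exactly what makes these subtractions legitimate; for continuous controls it is automatic.

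\textbf{Step 2, case (i), $\tau=0$.} Sum the recursion over $k=0,\dots,n-1$. The Bregman terms telescope to at most $\lambda\mathcal D_h(v\mid u^0)$, so
\[
\sum_{k=1}^{n}\bigl(J^0(u^k)-J^0(v)\bigr)\le\lambda\mathcal D_h(v\mid u^0).
\]
Energy dissipation, Theorem~\ref{thm:energy}, makes $J^0(u^k)$ nonincreasing. Hence $n\bigl(J^0(u^n)-J^0(v)\bigr)$ is bounded by the left side, which gives the $\lambda/n$ rate.

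\textbf{Step 3, case (ii), $\tau>0$.} Take $v=u^\ast$. Then $J^\tau(u^{k+1})-J^\tau(u^\ast)\ge0$, so the recursion gives
\[
\mathcal D_h(u^\ast\mid u^{k+1})\le(1-\tau/\lambda)\mathcal D_h(u^\ast\mid u^k).
\]
Note $\tau\le L\le\lambda$, so the factor lies in $[0,1)$. Iterating,
\[
\mathcal D_h(u^\ast\mid u^{n-1})\le(1-\tau/\lambda)^{n-1}\mathcal D_h(u^\ast\mid u^0).
\]
Apply the recursion once more at $k=n-1$ and drop the nonpositive last term:
\[
J^\tau(u^n)-J^\tau(u^\ast)\le(\lambda-\tau)\mathcal D_h(u^\ast\mid u^{n-1})\le\lambda(1-\tau/\lambda)^{n-1}\mathcal D_h(u^\ast\mid u^0).
\]

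\textbf{Main obstacle.} The algebra is routine; the delicate points are bookkeeping.
\begin{itemize}
\item We must verify that relative convexity applies with the same gradient $\xi_t$ (including the $-\tau\nabla h$ term) that appears in the mirror step.
\item We must check $\tau\le\lambda$, which follows from the explicit form of $L$ in Lemma~\ref{lem:relative-smoothness}.
\item Lemma~\ref{lem:three-point} must be applied measurably and pointwise in $t$ before integrating. Continuity of all the controls involved (the admissibility lemma) makes this unproblematic.
\end{itemize}
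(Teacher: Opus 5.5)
Your proposal is correct and follows essentially the same route as the paper: the time-integrated three-point inequality, relative smoothness at $(u^n,u^{n+1})$ with $\lambda\ge L$, and relative convexity at $u^n$ combine into the same master inequality $J^\tau(u^{n+1})-J^\tau(v)\le(\lambda-\tau)\mathcal{D}_h(v\mid u^n)-\lambda\mathcal{D}_h(v\mid u^{n+1})$. From there, telescoping plus energy dissipation gives (i), and contraction of $\mathcal{D}_h(u^\ast\mid u^n)$ gives (ii). Your remark that $\tau\le L\le\lambda$ (from the explicit form of $L$), so that the contraction factor lies in $[0,1)$, is a small point the paper leaves implicit.
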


Thus, in the unregularized case ($\tau = 0$), the scheme attains the standard mirror-descent rate $O(1/n)$, while a positive regularization parameter ($\tau > 0$) improves the convergence to a geometric (exponential) rate.

\begin{proof}
Fix $n\ge 0$. By the mirror-step definition \eqref{eq:mirror-update}, applying Lemma~\ref{lem:three-point} pointwise in time with
\[
u=u_t^n,\qquad
\bar u=u_t^{n+1},\qquad
w=v_t,\qquad
\xi=\nabla_u \mathcal H_t^\tau(x_t^n,p_t^n,u_t^n),
\]
yields
\begin{multline*}
    \nabla_u \mathcal H_t^\tau(x_t^n,p_t^n,u_t^n)\cdot(v_t-u_t^n) - \lambda D_h(v_t\mid u_t^n) \\
    \le
\nabla_u \mathcal H_t^\tau(x_t^n,p_t^n,u_t^n)\cdot(u_t^{n+1}-u_t^n)
- \lambda D_h(u_t^{n+1}\mid u_t^n) - \lambda D_h(v_t\mid u_t^{n+1}).
\end{multline*}
Integrating in time gives
\begin{align}
&\int_0^T \nabla_u \mathcal H_t^\tau(x_t^n,p_t^n,u_t^n)\cdot(v_t-u_t^n)\dd t \notag\\
& \le
\int_0^T \nabla_u \mathcal H_t^\tau(x_t^n,p_t^n,u_t^n)\cdot(u_t^{n+1}-u_t^n)\dd t
-\lambda \mathcal{D}_h(u^{n+1}\mid u^n)
-\lambda \mathcal{D}_h(v\mid u^{n+1})
+\lambda \mathcal{D}_h(v\mid u^n).
\label{eq:key-3point-integrated}
\end{align}

On the other hand, the relative smoothness \eqref{eq:relative-smoothness-H} in Lemma~\ref{lem:relative-smoothness} applied to $(u^n,u^{n+1})$ gives
\[
J^\tau(u^{n+1})
\le
J^\tau(u^n)
-
\int_0^T \nabla_u \mathcal H_t^\tau(x_t^n,p_t^n,u_t^n)\cdot(u_t^{n+1}-u_t^n)\dd t
+
L \mathcal{D}_h(u^{n+1}\mid u^n).
\]
Substituting \eqref{eq:key-3point-integrated} and using $\lambda\ge L$ gives
\begin{align}
J^\tau(u^{n+1})
&\le
J^\tau(u^n)
-
\int_0^T \nabla_u \mathcal H_t^\tau(x_t^n,p_t^n,u_t^n)\cdot(v_t-u_t^n)\dd t \notag\\
&\qquad
+\lambda \mathcal{D}_h(v\mid u^n)-\lambda \mathcal{D}_h(v\mid u^{n+1})
-(\lambda-L)\mathcal{D}_h(u^{n+1}\mid u^n) \notag\\
&\le
J^\tau(u^n)
-
\int_0^T \nabla_u \mathcal H_t^\tau(x_t^n,p_t^n,u_t^n)\cdot(v_t-u_t^n)\dd t
+\lambda \mathcal{D}_h(v\mid u^n)-\lambda \mathcal{D}_h(v\mid u^{n+1}).
\label{eq:pre-convexity}
\end{align}

Now apply relative convexity \eqref{eq:relative-convexity-H} with $u=u^n$ and $v$ arbitrary:
\[
J^\tau(v)
\ge
J^\tau(u^n)
-
\int_0^T \nabla_u \mathcal H_t^\tau(x_t^n,p_t^n,u_t^n)\cdot(v_t-u_t^n)\dd t
+
\tau \mathcal{D}_h(v\mid u^n).
\]
Substituting this lower bound for the middle term in \eqref{eq:pre-convexity}, we obtain
\begin{equation}\label{eq:master-ineq}
J^\tau(u^{n+1})-J^\tau(v)
\le
(\lambda-\tau)\mathcal{D}_h(v\mid u^n)-\lambda \mathcal{D}_h(v\mid u^{n+1}).
\end{equation}

\paragraph{Case 1: \texorpdfstring{$\tau=0$}{tau=0}.}
Then \eqref{eq:master-ineq} becomes
\[
J^0(u^{n+1})-J^0(v)
\le
\lambda \mathcal{D}_h(v\mid u^n)-\lambda \mathcal{D}_h(v\mid u^{n+1}).
\]
Summing from $n=0$ to $m-1$,
\[
\sum_{n=0}^{m-1}\bigl(J^0(u^{n+1})-J^0(v)\bigr)
\le
\lambda \mathcal{D}_h(v\mid u^0)-\lambda \mathcal{D}_h(v\mid u^m)
\le
\lambda \mathcal{D}_h(v\mid u^0).
\]
By the energy dissipation property, $J^0(u^m)\le J^0(u^{n+1})$ for all $n=0,\dots,m-1$, so
\[
m\bigl(J^0(u^m)-J^0(v)\bigr)
\le
\sum_{n=0}^{m-1}\bigl(J^0(u^{n+1})-J^0(v)\bigr)
\le
\lambda \mathcal{D}_h(v\mid u^0).
\]
Hence
\[
J^0(u^m)-J^0(v)\le \frac{\lambda}{m}\mathcal{D}_h(v\mid u^0).
\]

\paragraph{Case 2: \texorpdfstring{$\tau>0$}{tau>0}.}
Let $v=u^\ast$, where $u^\ast$ minimizes $J^\tau$. Since
\[
J^\tau(u^{n+1})-J^\tau(u^\ast)\ge 0,
\]
the inequality \eqref{eq:master-ineq} implies
\[
\lambda \mathcal{D}_h(u^\ast\mid u^{n+1})
\le
(\lambda-\tau)\mathcal{D}_h(u^\ast\mid u^n),
\]
that is,
\[
\mathcal{D}_h(u^\ast\mid u^{n+1})
\le
\Bigl(1-\frac{\tau}{\lambda}\Bigr)\mathcal{D}_h(u^\ast\mid u^n).
\]
Iterating,
\[
\mathcal{D}_h(u^\ast\mid u^n)
\le
\Bigl(1-\frac{\tau}{\lambda}\Bigr)^n \mathcal{D}_h(u^\ast\mid u^0).
\]
Finally, returning to \eqref{eq:master-ineq} with $v=u^\ast$ and dropping the nonpositive term
$-\lambda \mathcal{D}_h(u^\ast\mid u^{n+1})$, we obtain
\[
J^\tau(u^{n+1})-J^\tau(u^\ast)
\le
(\lambda-\tau)\mathcal{D}_h(u^\ast\mid u^n)
\le
\lambda \mathcal{D}_h(u^\ast\mid u^n).
\]
Hence
\[
J^\tau(u^{n+1})-J^\tau(u^\ast)
\le
\lambda \Bigl(1-\frac{\tau}{\lambda}\Bigr)^n \mathcal{D}_h(u^\ast\mid u^0).
\]
Reindexing $n\mapsto n-1$ yields the claimed geometric estimate.
\end{proof}

\section{Numerical Examples}\label{sec:numerics}

In this section, we present three numerical examples illustrating the behavior of the mirror-descent iteration in different settings. We begin with a one-dimensional linear-quadratic problem for which the optimal solution is available explicitly, providing a simple benchmark for the algorithm. We then consider a degenerate convex example to illustrate the qualitative difference between the unregularized and regularized cases. Finally, we examine a nonlinear system with coupled dynamics in increasing state dimensions to explore the behavior of the method beyond the simple one-dimensional settings. These experiments are intended primarily as numerical illustrations of the convergence phenomena discussed in the preceding sections.

\subsection{A one-dimensional linear-quadratic example}

We consider a one-dimensional linear-quadratic (LQ) optimal control problem for which the optimal solution is explicit. Let
\[
\dot x_t = a x_t + u_t, \qquad x_0 = \xi, \qquad t \in [0,T],
\]
and define the regularized cost functional
\[
J^\tau(u)
=
\frac{1}{2}\int_0^T \Big( q x_t^2 + \tau u_t^2 \Big)\dd t
+
\frac{s}{2} x_T^2,
\]
where $a\in\mathbb{R}$, $q\ge 0$, $s\ge 0$. Here $\tau$ is the effective quadratic weight on the control, which also plays the role of regularization. To ensure the existence of an optimal solution, we set $\tau > 0$.

This corresponds to the data
\[
b(x,u)=ax+u,\qquad
f(x,u)=\frac{1}{2}q x^2,\qquad
h(u)=\frac{1}{2}u^2,\qquad
g(x)=\frac{s}{2}x^2.
\]

The regularized Hamiltonian is given by
\[
\mathcal H_t^\tau(x,p,u)
=
p(ax+u) - \frac{1}{2}q x^2 - \frac{1}{2} \tau u^2.
\]

The Pontryagin maximum principle yields the state and adjoint equations
\[
\dot x_t = a x_t + u_t,\qquad x_0=\xi,
\]
\[
\dot p_t = -\nabla_x \mathcal H_t^0(x_t,p_t,u_t) = -a p_t + q x_t,\qquad
p_T = -s x_T,
\]
together with the maximum condition
\[
0 = \nabla_u \mathcal H_t^\tau(x_t,p_t,u_t) = p_t - \tau u_t.
\]
Hence the optimal control satisfies
\[
u_t^* = \frac{p_t^*}{\tau}.
\]

Introducing the ansatz $p_t^* = -P(t)x_t^*$, one obtains the Riccati equation
\[
P'(t)
=
\frac{1}{\tau}P(t)^2 - 2aP(t) - q,
\qquad
P(T)=s.
\]
In one dimension, this equation admits an explicit solution. Setting
\[
\gamma := \sqrt{a^2 + \frac{q}{\tau}},
\]
and defining
\[
P_\pm = \tau(a \pm \gamma),
\qquad
\kappa = \frac{s - P_+}{s - P_-},
\]
we obtain
\[
P(t)
=
\frac{P_+ - \kappa e^{-2\gamma(T-t)} P_-}
{1 - \kappa e^{-2\gamma(T-t)}}.
\]
The optimal state and control are then given by
\[
x_t^*
=
\xi \exp\!\left(\int_0^t \Big(a - \frac{P(\sigma)}{\tau}\Big)\dd\sigma\right),
\qquad
u_t^*
=
-\frac{P(t)}{\tau} x_t^*.
\]

In the Euclidean case $h(u)=\frac12|u|^2$ and $U=\mathbb{R}$, the mirror descent update reduces to
\[
u_t^{n+1}
=
\left(1-\frac{\tau}{\lambda}\right)u_t^n
+
\frac{1}{\lambda}\nabla_u \mathcal H_t^0(x_t^n,p_t^n,u_t^n).
\]
Since $\nabla_u \mathcal H_t^0(x,p,u)=p$,
we obtain the explicit update
\[
u_t^{n+1}
=
\left(1-\frac{\tau}{\lambda}\right)u_t^n
+
\frac{1}{\lambda}p_t^n.
\]

In the numerical experiments, we take
\[
T=1,\quad \xi=0.5,\quad a=1,\quad q=1,\quad s=1,\quad \lambda=30,
\]
and initialize the algorithm with a constant control $u_t^0 \equiv 4$. The result is plotted in Figure~\ref{fig:gen_lqr-tau-comparison}. We observe clearly geometric convergence. 

\begin{figure}[ht]
    \centering
    \includegraphics[width=0.70\textwidth]{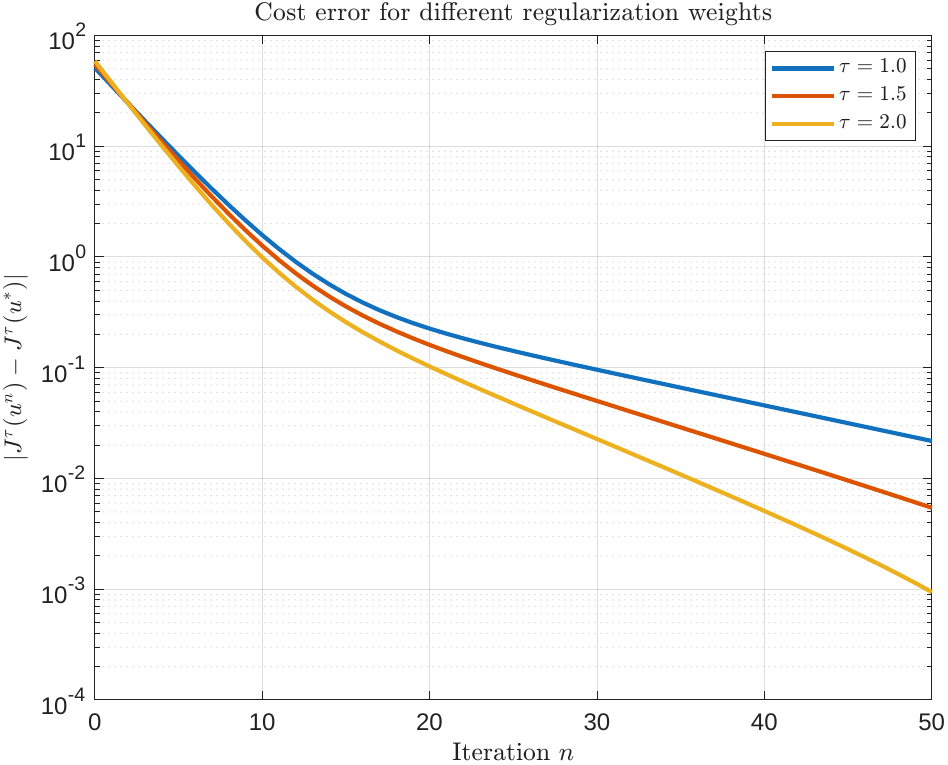}
    \caption{Objective convergence of Algorithm~\ref{alg:mirror-pmp} for the one-dimensional linear-quadratic example with different values of the regularization parameter $\tau$. The vertical axis shows the cost error $|J^{\tau}(u^n)-J^{\tau}(u^*)|$ on a logarithmic scale.}
    \label{fig:gen_lqr-tau-comparison}
\end{figure}

\subsection{A one-dimensional quartic terminal example illustrating the rate difference between \texorpdfstring{$\tau=0$}{tau=0} and \texorpdfstring{$\tau>0$}{tau>0}}

We next present a simple one-dimensional control problem in which the convergence behavior differs qualitatively between the cases $\tau=0$ and $\tau>0$, due to the lack of strong convexity.

Consider state dynamics
\[
\dot x_t = u_t, \qquad x_0 = 0, \qquad t\in[0,T],
\]
and define the cost functional
\[
J^\tau(u)
=
\frac14 x_T^4
+
\frac{\tau}{2}\int_0^T u_t^2\dd t,
\qquad x_T=\int_0^T u_t\dd t.
\]
For $\tau > 0$, the unique optimal control is
\[
u_t^* \equiv 0,
\qquad
x_t^* \equiv 0.
\]
For $\tau = 0$, every control with zero terminal state is optimal, including $u_t^* \equiv 0$.

The regularized Hamiltonian is
\[
\mathcal H_t^\tau(x,p,u)=pu-\frac{\tau}{2}u^2.
\]
The adjoint equation is
\[
\dot p_t = 0,
\qquad
p_T = -x_T^3.
\]
Hence
\[
p_t \equiv p_T = -x_T^3.
\]

In the Euclidean case $h(u)=\frac12 u^2$ and $U=\mathbb{R}$, Algorithm~\ref{alg:mirror-pmp} reduces to
\[
u_t^{n+1}
=
u_t^n + \frac1\lambda\bigl(p_t^n-\tau u_t^n\bigr).
\]
If the initial guess is chosen to be constant, say
\[
u_t^0\equiv \alpha_0,
\]
then all iterates remain constant in time, and we may write
\[
u_t^n\equiv \alpha_n.
\]
Since
\[
x_t^n=\alpha_n t,
\qquad
x_T^n=T\alpha_n,
\qquad
p_t^n\equiv -(T\alpha_n)^3,
\]
the algorithm reduces to the scalar iteration
\[
\alpha_{n+1}
=
\alpha_n - \frac1\lambda\bigl(T^3\alpha_n^3+\tau\alpha_n\bigr).
\]
Thus, when $\tau = 0$, the linear term vanishes and the iteration reduces to
\[
\alpha_{n+1} = \alpha_n - \frac{T^3}{\lambda}\alpha_n^3,
\]
so that the leading-order behavior is governed by the cubic term, yielding only sublinear convergence.

In contrast, when $\tau > 0$, the additional linear damping term $-\frac{\tau}{\lambda}\alpha_n$ dominates near the optimum $\alpha^* = 0$. More precisely, as $\alpha_n \to 0$, the cubic term is of higher order and can be neglected, so that the iteration can be linearized as
\[
\alpha_{n+1} \approx \left(1 - \frac{\tau}{\lambda}\right)\alpha_n.
\]
Consequently, the iteration behaves asymptotically like a linear contraction, and the convergence becomes geometric.

In the numerical experiments, we set $T = 1$ and $\lambda = 10$, and initialize the algorithm with a constant control $u_t^0 \equiv 2$. The results are shown in Figure~\ref{fig:quartic-tau-comparison}, which illustrates the different convergence behaviors for $\tau = 0$ and $\tau > 0$.

\begin{figure}[ht]
    \centering

    \begin{subfigure}{0.48\textwidth}
        \centering
        \includegraphics[width=\textwidth]{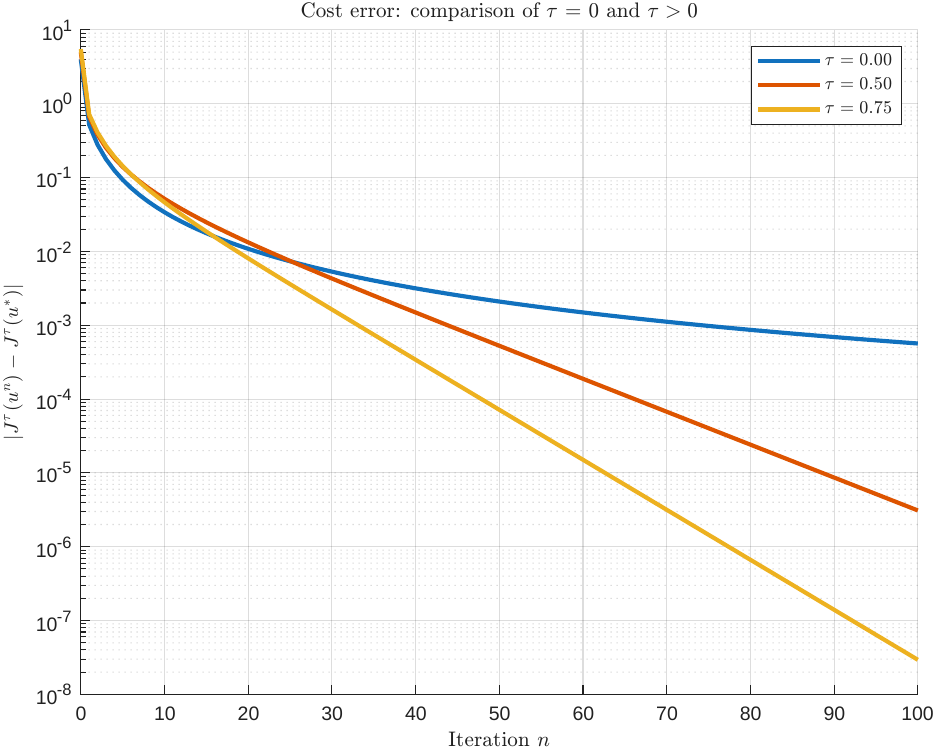}
        \caption{Semilog scale}
    \end{subfigure}
    \hfill
    \begin{subfigure}{0.48\textwidth}
        \centering
        \includegraphics[width=\textwidth]{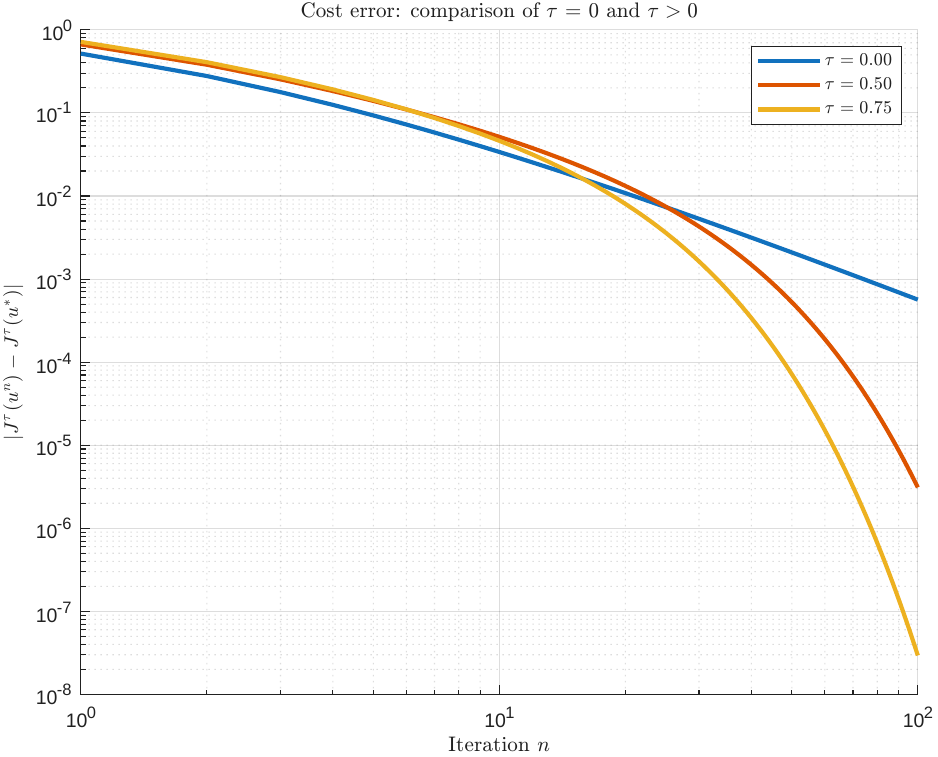}
        \caption{Log-log scale}
    \end{subfigure}

    \caption{Objective convergence of Algorithm~\ref{alg:mirror-pmp} for the one-dimensional quartic terminal example with different values of the regularization parameter $\tau$. The cost error $|J^\tau(u^n) - J^\tau(u^*)|$ is shown on semilog (left) and log-log (right) scales, highlighting the difference between exponential convergence for $\tau>0$ and sublinear behavior when $\tau=0$.}

    \label{fig:quartic-tau-comparison}
\end{figure}

Therefore, this example provides a simple control problem in which the regularization parameter $\tau$ qualitatively changes the convergence rate: the unregularized case is only sublinear, whereas any positive regularization yields geometric convergence near the optimum.

\subsection{A high-dimensional nonlinear example with coupled dynamics}

We next study how the mirror descent behaves as the state dimension increases. To this end, we consider a family of fully coupled nonlinear
control problems in dimensions
\[
d\in\{5,10,20\}.
\]
For each dimension $d$, we consider the controlled system
\[
\dot x_t = A x_t + B u_t + \gamma \sin(Cx_t),
\qquad
x_0=x^{\rm init},
\qquad
t\in[0,T],
\]
where $x_t\in\mathbb R^d$ and the control $u_t\in\mathbb R^d$. The sine nonlinearity is applied componentwise, and the matrices
\[
A,B,C\in\mathbb R^{d\times d}
\]
are chosen to be dense so that the coordinates are genuinely coupled and the
problem is not reducible to independent one-dimensional subsystems.

The cost functional is given by
\[
J^\tau(u)
=
\frac{q}{2d} \int_0^T
|x_t|^2 \,\dd t
+
\frac{s}{2d}|x_T-x^{\rm tar}|^2
+
\frac{\tau}{2} \int_0^T |u_t|^2\dd t.
\]
The normalization by $d$ is included so that the objective remains comparable across dimensions.

The corresponding Hamiltonian is
\[
\mathcal H_t^0(x,p,u)
=
p^\top\bigl(Ax+Bu+\gamma\sin(C x)\bigr)
-
\frac{q}{2d}|x|^2.
\]
The adjoint equation takes the form
\[
\dot p_t
=
-
\left(
A^\top
+
\gamma C^\top \operatorname{diag}(\cos(Cx_t))
\right)p_t
+
\frac{q}{d}x_t,
\qquad
p_T
=
-\frac{s}{d}(x_T-x^{\rm tar}).
\]

For $h(u)=\frac12\|u\|^2$ and $U=\mathbb R^d$, the Euclidean mirror update becomes
\[
u_t^{n+1}
=
u_t^n
+
\frac1\lambda
\left(
B^\top p_t^n
-
\tau u_t^n
\right).
\]

In the numerical experiments, we fix $T=1$ and discretize the time interval using $N_t=500$ uniform steps. The state dimension is varied over $d\in\{5,10,20\}$. The system matrices are generated randomly as
\[
A = 0.15 \frac{M_1}{\sqrt{d}} - 0.6 I,
\quad
B = 0.6 \frac{M_2}{\sqrt{d}} + 0.3 I,
\quad
C = 0.8 \frac{M_3}{\sqrt{d}},
\]
where $M_1, M_2, M_3$ have i.i.d.\ standard normal entries. The cost parameters are chosen as
\[
q = 1, \quad s = 5, \quad \gamma =1, \quad \tau = 0.5.
\]
The mirror descent inverse step size is set to $\lambda=20$. The initial state and target are defined componentwise by
\[
x^{\mathrm{init}}_i = 0.4\sin\left(\frac{i}{d}\pi\right),
\quad
x^{\mathrm{tar}}_i = 0.8\cos\left(\frac{i}{d}\pi\right), \qquad 1 \leq i \leq d.
\]
The initial control is chosen as
\[
u_t^0 = 2\sin(2\pi t)\mathbf{1}
+ 0.5\cos(4\pi t)\,v,
\]
where $v$ is a fixed vector with entries uniformly spaced in $[-1,1]$.

The convergence behavior is illustrated in Figure~\ref{fig:high-dim-comparison}, where we compare the objective error across different state dimensions $d \in \{5,10,20\}$. For each experiment, the algorithm is run up to $N=1000$ iterations, while only the initial portion $n=0,\dots,200$ is displayed in the figure to highlight the pre-asymptotic regime.

We observe that all dimensions exhibit approximately linear decay on the semilog scale over the displayed range. Moreover, while the objective decreases at a similar rate, higher-dimensional problems require more iterations to achieve the same level of accuracy.

\begin{figure}[ht]
    \centering
    \includegraphics[width=0.70\textwidth]{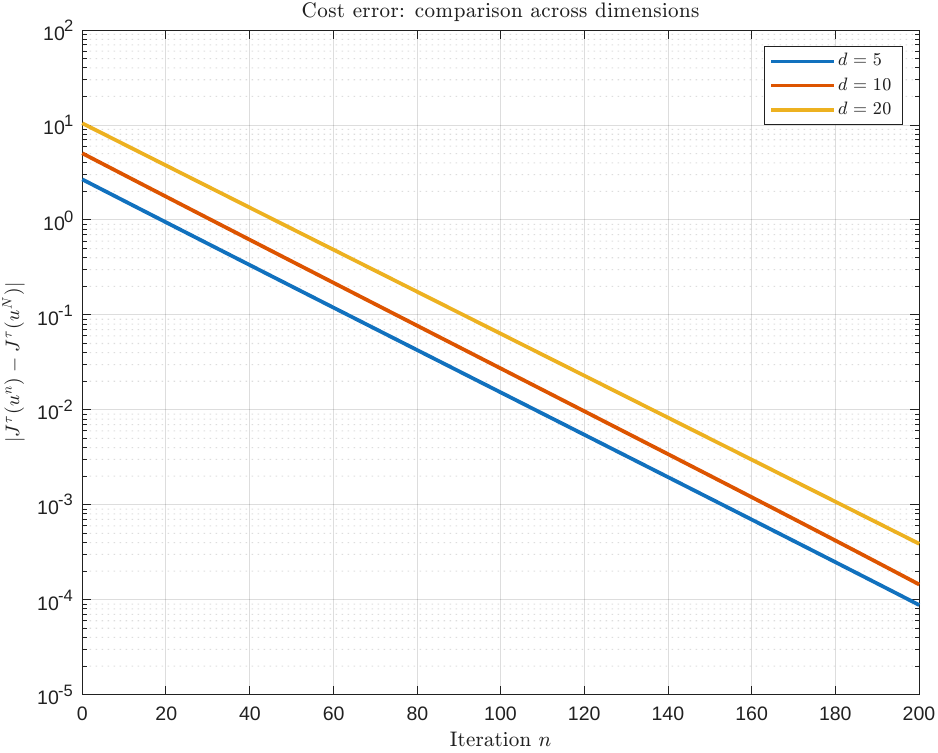}
    \caption{Objective convergence of Algorithm~\ref{alg:mirror-pmp} for the high-dimensional nonlinear example with coupled dynamics, comparing different state dimensions $d$. The cost error $|J^\tau(u^n) - J^\tau(u^N)|$, a surrogate gap relative to the final iterate, is plotted on a semilog scale. All curves exhibit exponential decay, while higher-dimensional problems require more iterations to achieve the same level of accuracy.}
    \label{fig:high-dim-comparison}
\end{figure}

\appendix

\section{Proof of the First Variation Formula}\label{sec:prf}
\begin{proof}[Proof of Lemma~\ref{lem:first-variation}]
Fix $u,v\in\mathcal U$ and set
\[
\delta u := v-u.
\]
For $\varepsilon \in [0,1]$, define the perturbed control
\[
u^\varepsilon := u+\varepsilon \delta u.
\]
Since $U$ is convex, $u^\varepsilon_t\in U$ for every $t$ whenever $\varepsilon\in[0,1]$,
hence $u^\varepsilon\in\mathcal U$. We compute the one-sided directional derivative at $\varepsilon=0$ along this feasible segment.

Let $x$ and $x^\varepsilon$ denote the states associated with $u$ and $u^\varepsilon$ respectively:
\begin{align*}
& \dot x_t=b_t(x_t,u_t),
\qquad
x(0)=x_0; \\
& \dot x_t^\varepsilon=b_t(x_t^\varepsilon,u_t^\varepsilon),
\qquad
x^\varepsilon(0)=x_0.
\end{align*}

We first identify the first-order variation of the state.
Define $y$ as the solution of the linearized equation
\begin{equation}\label{eq:linearized-state}
\dot y_t
=
\nabla_x b_t(x_t,u_t)\,y_t+\nabla_u b_t(x_t,u_t)\,\delta u_t,
\qquad
y_0=0.
\end{equation}

We claim that
\begin{equation}\label{eq:state-diff-quotient}
\frac{x^\varepsilon-x}{\varepsilon}\to y
\qquad\text{in } C([0,T];\mathbb R^d)
\quad\text{as }\varepsilon\to 0^+.
\end{equation}

To prove this, define the remainder
\[
r_t^\varepsilon
:=
x_t^\varepsilon-x_t-\varepsilon y_t.
\]
Then $r_0^\varepsilon=0$, and
\[
\dot r_t^\varepsilon
=
b_t(x_t^\varepsilon,u_t^\varepsilon)-b_t(x_t,u_t)
-
\varepsilon\Big(
\nabla_x b_t(x_t,u_t)y_t+\nabla_u b_t(x_t,u_t)\delta u_t
\Big).
\]
Now write
\[
x_t^\varepsilon=x_t+\varepsilon y_t+r_t^\varepsilon,
\qquad
u_t^\varepsilon=u_t+\varepsilon\delta u_t.
\]
Using the first-order Taylor expansion of $b_t(\cdot,\cdot)$ in the variables $(x,u)$
around $(x_t,u_t)$, we obtain
\[
b_t(x_t^\varepsilon,u_t^\varepsilon)-b_t(x_t,u_t)
=
\nabla_x b_t(x_t,u_t)\bigl(\varepsilon y_t+r_t^\varepsilon\bigr)
+
\nabla_u b_t(x_t,u_t)\,\varepsilon\delta u_t
+
\rho_t^\varepsilon,
\]
where the remainder $\rho_t^\varepsilon$ satisfies
\[
|\rho_t^\varepsilon|
\le
C\Big(|\varepsilon y_t+r_t^\varepsilon|+|\varepsilon\delta u_t|\Big)^2
\]
for a constant $C>0$ independent of $\varepsilon$ and $t$.
Hence
\[
\dot r_t^\varepsilon
=
\nabla_x b_t(x_t,u_t) r_t^\varepsilon + \rho_t^\varepsilon.
\]
Therefore
\begin{equation}\label{eq:re_e}
    |r_t^\varepsilon|
\le
\int_0^t |\nabla_x b_s(x_s,u_s)|\,|r_s^\varepsilon|\dd s
+
\int_0^t |\rho_s^\varepsilon|\dd s.
\end{equation}
By Assumption~\ref{asp:main_asp}, $\sup_{s \in [0,T]} |\nabla_x b(x_s, u_s)|\leq M$.
Moreover,
\begin{equation}\label{eq:rho_e}
    |\rho_t^\varepsilon|
\le
C\Bigl(\varepsilon^2|y_t|^2+|r_t^\varepsilon|^2+\varepsilon^2|\delta u_t|^2\Bigr).
\end{equation}
We estimate the square term $|r_t^{\varepsilon}|^2$ as follows. By the definition of $r_t^{\varepsilon}$,
\[
\sup_{t\in[0,T]} |r_t^\varepsilon|
\le
\sup_{t\in[0,T]} |x_t^\varepsilon-x_t|
+
\varepsilon\,\sup_{t\in[0,T]} |y_t|.
\]
By Assumption~\ref{asp:main_asp} (2) and $x_0^{\varepsilon} = x_0$,
\[
|x_t^\varepsilon - x_t| \leq \int_0^t |b_s(x_s^\varepsilon, u_s^\varepsilon) - b_s(x_s, u_s)| \dd s \leq \int_0^t M \left( |x_s^\varepsilon - x_s| + |u_s^\varepsilon - u_s| \right) \dd s.
\]
By Gr\"onwall's inequality,
\[
\sup_{t\in[0,T]} |x_t^\varepsilon-x_t|
\le
M e^{MT} \|u^\varepsilon-u\|_{L^1}
=
M e^{MT} \varepsilon\,\|\delta u\|_{L^1},
\]
By the linearized equation \eqref{eq:linearized-state} and $y_0 = 0$,
\[
|y_t| \leq \int_0^t |\nabla_x b_s(x_s, u_s)|\,|y_s|\dd s + \int_0^t |\nabla_u b_s(x_s, u_s)| \, |\delta u_s| \dd s.
\]
Assumption~\ref{asp:main_asp} yields
\[
|y_t| \leq M \left( \int_0^t |y_s| \dd s + \int_0^t |\delta u_s| \dd s \right).
\]
By Gr\"onwall's inequality,
\[
\sup_{t \in [0,T]} |y_t| \leq M e^{MT}\|\delta u\|_{L^1}.
\]
Therefore,
\[
\sup_{t\in[0,T]} |r_t^\varepsilon|
\le
C' \varepsilon, \qquad C':= 2 Me^{MT} \|\delta u\|_{L^1}.
\]
Substituting back to \eqref{eq:rho_e} and
using that $y\in C([0,T];\mathbb R^d)$ and $\delta u\in L^2([0,T];\mathbb R^m)$, we obtain
\[
\int_0^T |\rho_t^\varepsilon| \dd t = o(\varepsilon).
\]
By applying Gr\"onwall to \eqref{eq:re_e},
\[
\sup_{t\in[0,T]} \left|\frac{r_t^\varepsilon}{\varepsilon}\right|\to 0,
\]
which proves \eqref{eq:state-diff-quotient}.

We next compute the derivative of the cost.
By definition,
\[
J^0(u^\varepsilon)-J^0(u)
=
\int_0^T \Big(f_t(x_t^\varepsilon,u_t^\varepsilon)-f_t(x_t,u_t)\Big)\dd t
+
g(x_T^\varepsilon)-g(x_T).
\]
We treat the running cost and terminal cost separately.

For the running cost, by the first-order Taylor expansion of $f_t(\cdot,\cdot)$
around $(x_t,u_t)$,
\[
f_t(x_t^\varepsilon,u_t^\varepsilon)-f_t(x_t,u_t)
=
\nabla_x f_t(x_t,u_t)\cdot (x_t^\varepsilon-x_t)
+
\nabla_u f_t(x_t,u_t)\cdot (u_t^\varepsilon-u_t)
+
\tilde\rho_t^\varepsilon,
\]
where
\[
|\tilde\rho_t^\varepsilon|
\le
C\Big(|x_t^\varepsilon-x_t|+|u_t^\varepsilon-u_t|\Big)^2.
\]
Since
\[
x_t^\varepsilon-x_t=\varepsilon y_t+r_t^\varepsilon,
\qquad
u_t^\varepsilon-u_t=\varepsilon \delta u_t,
\]
and
\[
\sup_{t\in[0,T]}|r_t^\varepsilon|=o(\varepsilon),
\]
it follows that
\[
\int_0^T |\tilde\rho_t^\varepsilon|\dd t=o(\varepsilon).
\]
Hence
\[
\int_0^T \Big(f_t(x_t^\varepsilon,u_t^\varepsilon)-f_t(x_t,u_t)\Big)\dd t
=
\varepsilon\int_0^T
\Big(
\nabla_x f_t(x_t,u_t)\cdot y_t
+
\nabla_u f_t(x_t,u_t)\cdot \delta u_t
\Big)\dd t
+o(\varepsilon).
\]

For the terminal cost, since $g$ is continuously differentiable,
\[
g(x_T^\varepsilon)-g(x_T)
=
\nabla g(x_T)\cdot (x_T^\varepsilon-x_T)+o(|x_T^\varepsilon-x_T|).
\]
Using
\[
x_T^\varepsilon-x_T=\varepsilon y_T+r_T^\varepsilon
\qquad\text{and}\qquad
r_T^\varepsilon=o(\varepsilon),
\]
we obtain
\[
g(x_T^\varepsilon)-g(x_T)
=
\varepsilon\,\nabla g(x_T)\cdot y_T+o(\varepsilon).
\]

Combining the two expansions, we conclude that
\[
J^0(u^\varepsilon)-J^0(u)
=
\varepsilon\int_0^T
\Big(
\nabla_x f_t(x_t,u_t)\cdot y_t
+
\nabla_u f_t(x_t,u_t)\cdot \delta u_t
\Big)\dd t
+
\varepsilon\,\nabla g(x_T)\cdot y_T
+
o(\varepsilon).
\]
Therefore,
\begin{equation}\label{eq:pre-adjoint-first-variation}
dJ^0(u)(\delta u)
=
\int_0^T
\Big(
\nabla_x f_t(x_t,u_t)\cdot y_t
+
\nabla_u f_t(x_t,u_t)\cdot \delta u_t
\Big)\dd t
+
\nabla g(x_T)\cdot y_T.
\end{equation}

It remains to eliminate the state variation $y$ using the adjoint equation.
Recall that the adjoint $p$ satisfies
\[
\dot p_t
=
-\nabla_x \mathcal H_t^0(x_t,p_t,u_t),
\qquad
p_T= - \nabla g(x_T).
\]
Since
\[
\mathcal H_t^0(x,p,u)=p\cdot b_t(x,u) - f_t(x,u),
\]
we have
\[
\dot p_t
=
-\nabla_x b_t(x_t,u_t)^\top p_t + \nabla_x f_t(x_t,u_t).
\]
We now compute
\[
\frac{d}{dt}(p_t\cdot y_t)=\dot p_t\cdot y_t+p_t\cdot \dot y_t.
\]
Using the equations for $p$ and $y$, we get
\begin{align*}
\frac{d}{dt}(p_t\cdot y_t)
&=
\Big(-\nabla_x b_t(x_t,u_t)^\top p_t + \nabla_x f_t(x_t,u_t)\Big)\cdot y_t \\
&\quad
+
p_t\cdot
\Big(
\nabla_x b_t(x_t,u_t)y_t+\nabla_u b_t(x_t,u_t)\delta u_t
\Big).
\end{align*}
The two terms involving $\nabla_x b_t$ cancel, and thus
\[
\frac{d}{dt}(p_t\cdot y_t)
=
\nabla_x f_t(x_t,u_t)\cdot y_t
+
p_t\cdot \nabla_u b_t(x_t,u_t)\delta u_t.
\]
Integrating over $[0,T]$ and using $y_0=0$ and $p_T= - \nabla g(x_T)$, we obtain
\[
- \nabla g(x_T)\cdot y_T
=
\int_0^T \nabla_x f_t(x_t,u_t)\cdot y_t\dd t
+
\int_0^T p_t\cdot \nabla_u b_t(x_t,u_t)\delta u_t\dd t.
\]
Substituting this identity into \eqref{eq:pre-adjoint-first-variation}, the
terms involving $\nabla_x f_t(x_t,u_t)\cdot y_t$ cancel and we obtain
\[
dJ^0(u)(\delta u)
=
\int_0^T
\Big(
- p_t\cdot \nabla_u b_t(x_t,u_t)\delta u_t
+
\nabla_u f_t(x_t,u_t)\cdot \delta u_t
\Big)\dd t.
\]
Equivalently,
\[
dJ^0(u)(\delta u)
=
\int_0^T
\Big(
-\nabla_u b_t(x_t,u_t)^\top p_t+\nabla_u f_t(x_t,u_t)
\Big)\cdot \delta u_t\dd t.
\]
Since
\[
\nabla_u \mathcal H_t^0(x_t,p_t,u_t)
=
\nabla_u b_t(x_t,u_t)^\top p_t - \nabla_u f_t(x_t,u_t),
\]
we conclude that
\[
dJ^0(u)(\delta u)
=
- \int_0^T \nabla_u \mathcal H_t^0(x_t,p_t,u_t)\cdot \delta u_t\dd t.
\]
Recalling that $\delta u=v-u$, this proves \eqref{eq:first-var-J0}.

Finally, since
\[
J^\tau(u)=J^0(u)+\tau \int_0^T h(u_t)\dd t,
\]
and $h$ is continuously differentiable, we have
\[
dJ^\tau(u)(\delta u)
=
dJ^0(u)(\delta u)
+
\tau \int_0^T \nabla h(u_t)\cdot \delta u_t\dd t.
\]
Therefore
\[
dJ^\tau(u)(\delta u)
=
- \int_0^T \nabla_u \mathcal H_t^\tau(x_t,p_t,u_t)\cdot \delta u_t\dd t,
\]
which proves \eqref{eq:first-var-Jtau}.
\end{proof}

\section*{Acknowledgment}
This research is supported in part by the National Science Foundation via awards IIS-2403276 and DMS-2309378.

\bibliographystyle{plain}
\bibliography{refs}

@book{yong1999stochastic,
  title={Stochastic controls: Hamiltonian systems and HJB equations},
  author={Yong, Jiongmin and Zhou, Xun Yu},
  volume={43},
  year={1999},
  publisher={Springer Science \& Business Media}
}

@article{sethi2024modified,
  title={The modified MSA, a gradient flow and convergence},
  author={Sethi, Deven and {\v{S}}i{\v{s}}ka, David},
  journal={The Annals of Applied Probability},
  volume={34},
  number={5},
  pages={4455--4492},
  year={2024},
  publisher={Institute of Mathematical Statistics}
}

@article{kerimkulov2025mirror,
  title={Mirror descent for stochastic control problems with measure-valued controls},
  author={Kerimkulov, Bekzhan and {\v{S}}i{\v{s}}ka, David and Szpruch, {\L}ukasz and Zhang, Yufei},
  journal={Stochastic Processes and their Applications},
  pages={104765},
  year={2025},
  publisher={Elsevier}
}

@article{sethi2025mirror,
  title={Mirror descent for constrained stochastic control problems},
  author={Sethi, Deven and {\v{S}}i{\v{s}}ka, David},
  journal={arXiv preprint arXiv:2506.02564},
  year={2025}
}

@article{reisinger2023linear,
  title={Linear convergence of a policy gradient method for some finite horizon continuous time control problems},
  author={Reisinger, Christoph and Stockinger, Wolfgang and Zhang, Yufei},
  journal={SIAM Journal on Control and Optimization},
  volume={61},
  number={6},
  pages={3526--3558},
  year={2023},
  publisher={SIAM}
}

@article{porner2016iterative,
  title={An iterative Bregman regularization method for optimal control problems with inequality constraints},
  author={P{\"o}rner, Frank and Wachsmuth, Daniel},
  journal={Optimization},
  volume={65},
  number={12},
  pages={2195--2215},
  year={2016},
  publisher={Taylor \& Francis}
}

@article{porner2018inexact,
  title={Inexact iterative Bregman method for optimal control problems},
  author={P{\"o}rner, Frank},
  journal={Numerical Functional Analysis and Optimization},
  volume={39},
  number={4},
  pages={491--516},
  year={2018},
  publisher={Taylor \& Francis}
}

@article{chernousko1982method,
  title={Method of successive approximations for solution of optimal control problems},
  author={Chernousko, Felix L and Lyubushin, AA},
  journal={Optimal Control Applications and Methods},
  volume={3},
  number={2},
  pages={101--114},
  year={1982},
  publisher={Wiley Online Library}
}

@article{krylov1972algorithm,
  title={An algorithm for the method of successive approximations in optimal control problems},
  author={Krylov, Igor Anatol'evich and Chernous' ko, Feliks Leonidovich},
  journal={USSR Computational Mathematics and Mathematical Physics},
  volume={12},
  number={1},
  pages={15--38},
  year={1972},
  publisher={Elsevier}
}

@article{kerimkulov2021modified,
  title={A modified MSA for stochastic control problems},
  author={Kerimkulov, Bekzhan and {\v{S}}i{\v{s}}ka, David and Szpruch, Lukasz},
  journal={Applied Mathematics \& Optimization},
  volume={84},
  number={3},
  pages={3417--3436},
  year={2021},
  publisher={Springer}
}

@article{schulman2017proximal,
  title={Proximal policy optimization algorithms},
  author={Schulman, John and Wolski, Filip and Dhariwal, Prafulla and Radford, Alec and Klimov, Oleg},
  journal={arXiv preprint arXiv:1707.06347},
  year={2017}
}

@inproceedings{schulman2015trust,
  title={Trust region policy optimization},
  author={Schulman, John and Levine, Sergey and Abbeel, Pieter and Jordan, Michael and Moritz, Philipp},
  booktitle={International conference on machine learning},
  pages={1889--1897},
  year={2015},
  organization={PMLR}
}

@article{wagener2019online,
  title={An online learning approach to model predictive control},
  author={Wagener, Nolan and Cheng, Ching-An and Sacks, Jacob and Boots, Byron},
  journal={arXiv preprint arXiv:1902.08967},
  year={2019}
}

@article{sethi2025entropy,
  title={Entropy annealing for policy mirror descent in continuous time and space},
  author={Sethi, Deven and {\v{S}}i{\v{s}}ka, David and Zhang, Yufei},
  journal={SIAM Journal on Control and Optimization},
  volume={63},
  number={4},
  pages={3006--3041},
  year={2025},
  publisher={SIAM}
}

\end{document}